\numberwithin{equation}{section}
\theoremstyle{plain}
\newtheorem{theorem}{Theorem}[section]
\newtheorem{lemma}[theorem]{Lemma}
\newtheorem{proposition}[theorem]{Proposition}
\theoremstyle{remark}
\newtheorem{question}[theorem]{Question}
\newtheorem{example}[theorem]{Example}
\newtheorem{remark}[theorem]{Remark}
\theoremstyle{definition}
\newtheorem{conjecture}[theorem]{Conjecture}
\newtheorem{definition}[theorem]{Definition}
\newcommand{\diam}{{{\operatorname{diam}}}}
\newcommand{\NEbar}{{\overline{\operatorname{NE}}}}
\newcommand{\Amp}{{{\operatorname{Amp}}}}
\newcommand{\Movbar}{{\overline{\operatorname{Mov}}}}
\DeclarePairedDelimiter\floor{\lfloor}{\rfloor}
\newcommand{\sK}{{\mathcal K}}
\newcommand{\sO}{{\mathcal O}}
\newcommand{\scrB}{{\mathscr B}}
\newcommand{\scrU}{{\mathscr U}}
\newcommand{\scrX}{{\mathscr X}}
\newcommand{\scrY}{{\mathscr Y}}
\newcommand{\C}{{\mathbb C}}
\renewcommand{\P}{{\mathbb P}}
\newcommand{\Q}{{\mathbb Q}}
\newcommand{\R}{{\mathbb R}}
\newcommand{\Z}{{\mathbb Z}}
\newcommand{\gothF}{{\mathfrak F}}
\newcommand{\gothG}{{\mathfrak G}}
\newcommand{\abs}[1]{{\left|#1\right|}}
\newcommand{\Aut}{\operatorname{Aut}}
\newcommand{\chern}{{\rm c}}
\newcommand{\codim}{\operatorname{codim}}
\newcommand{\Def}{{\operatorname{Def}}}
\newcommand{\eps}{\varepsilon}
\newcommand{\isom}{{\ \cong\ }}
\newcommand{\lt}{{\rm{lt}}}
\newcommand{\Nef}{{\operatorname{Nef}}}
\newcommand{\NE}{{\operatorname{NE}}}
\renewcommand{\O}{{\rm O}}
\newcommand{\ohne}{{\ \setminus \ }}
\newcommand{\ol}[1]{{\overline{#1}}}
\newcommand{\Pic}{\operatorname{Pic}}
\newcommand{\ratl}{\dashrightarrow}
\newcommand{\reg}{{\operatorname{reg}}}
\newcommand{\rk}{\operatorname{rk}}
\newcommand{\sing}{{\operatorname{sing}}}
\newcommand{\Sp}{{\operatorname{Sp}}}
\newcommand{\tensor}{\otimes}
\newcommand{\veps}{\varepsilon}
\newcommand{\vrho}{{\varrho}}
\newcommand{\wt}[1]{{\widetilde{#1}}}
\newcommand{\rrk}{{\operatorname{rrk}}}
\newcommand{\Mov}{{\operatorname{Mov}}}
\def\Mon{{\operatorname{Mon}}}
\newcommand{\lra}{\longrightarrow}
\setlist[enumerate,1]{label={\rm(\arabic*)}, ref={\rm\arabic*}}
\title{Non-hyperbolicity of holomorphic symplectic varieties}
\author{Ljudmila Kamenova}
\address{Department of Mathematics, 3-115, 
Stony Brook University, Stony Brook, NY 11794-3651, USA}
\email{kamenova@math.stonybrook.edu}
\author{Christian Lehn}
\address{Fakult\"at f\"ur Mathematik, Ruhr-Universit\"at Bochum, Universit\"ats\-stra\ss e~150, Postfach IB 45, 44801 Bochum, Germany}
\email{christian.lehn@rub.de}
\begin{document}



\maketitle

\begin{prelims}

\DisplayAbstractInEnglish

\bigskip

\DisplayKeyWords

\medskip

\DisplayMSCclass

\end{prelims}


\newpage

\setcounter{tocdepth}{1}

\tableofcontents


\section{Introduction}\label{section intro}
\thispagestyle{empty}

The Kobayashi pseudometric on a complex variety is the maximal pseudometric such that any holomorphic map from the Poincar\'e disk to the variety is distance decreasing. It is a fundamental object and of great interest in complex geometry. A variety is called \emph{Kobayashi hyperbolic} if this pseudometric is a genuine metric, \textit{i.e.}, if it is non-degenerate. Kobayashi's conjectures  \cite[Section~13(F), Problem~F.2, p.~405]{Ko76} predict that for Calabi--Yau varieties, the opposite is the case: This pseudometric vanishes identically.

In this article, we study non-hyperbolicity and vanishing of the Kobayashi pseudometric of compact K\"ahler holomorphic symplectic varieties. While Verbitsky \cite{Ver15, Ver17} has shown that any irreducible symplectic manifold with second Betti number $b_2 \geq 5$ is non-hyperbolic (building on Campana's result that any twistor family of irreducible symplectic manifolds contains at least one non-hyperbolic member, see \cite[Theorem~1]{Cam92}), a stronger statement has been shown by Kamenova--Lu--Verbitsky \cite{KLV14} under some additional geometric assumptions. More precisely, they prove that irreducible symplectic manifolds with second Betti number at least $13$ satisfying the hyperk\"ahler version of the SYZ conjecture (see Conjecture~\ref{conjecture syz}) have vanishing Kobayashi pseudometric; see \cite[Theorem~1.2]{KLV13v3}. Their strategy is to deform to a variety admitting two transversal Lagrangian fibrations and then use ergodicity and the upper semi-continuity of the Kobayashi diameter, see \cite[Corollary~1.23]{KLV14}, to transport the result to any other manifold of the same deformation type. 

The purpose of this article is to improve the Kamenova--Lu--Verbitsky bound on the second Betti number in order to obtain the vanishing of the Kobayashi pseudometric for all known examples of irreducible symplectic manifolds. Our key discovery is that for the pseudometric to vanish it is already enough to have \emph{one} Lagrangian fibration instead of two; see Theorem~\ref{theorem lagrangian fibration} for a precise statement. With this at hand, we can prove our main result (see Theorem~\ref{theorem main} for a slightly stronger statement). 

\begin{theorem}\label{theorem main introduction}
Let $X$ be a primitive symplectic variety. Suppose that every primitive symplectic variety which is a locally trivial deformation of\, $X$ satisfies the rational SYZ conjecture. Then the following hold:  
\begin{enumerate}
    \item If\, $b_2(X) \geq 5$, then $X$ is non-hyperbolic.
    \item If\, $b_2(X) \geq 7$, then the Kobayashi pseudometric $d_X$ vanishes identically.
\end{enumerate}
\end{theorem}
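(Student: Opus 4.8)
The plan is to reduce, via ergodicity, the assertion for an arbitrary $X$ to the single case already settled by Theorem~\ref{theorem lagrangian fibration}: a \emph{projective} primitive symplectic variety carrying a Lagrangian fibration has vanishing Kobayashi pseudometric. I would work throughout inside the locally trivial deformation theory of $X$. The governing data are the second Betti number $b_2$ and the Beauville--Bogomolov--Fujiki lattice $\Lambda = H^2(X,\Z)$ of signature $(3,b_2-3)$; both are constant in locally trivial families, so the hypotheses of Theorem~\ref{theorem main introduction} are themselves deformation invariant. Periods of such deformations are parametrised by a period domain $\Omega \subset \P(\Lambda\otimes\C)$ on which a monodromy group $\Gamma$ acts, and the essential input is that this action is ergodic; in particular, for a suitable algebraic sublattice the periods orthogonal to it, together with their $\Gamma$-translates, are dense in $\Omega$.

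First I would manufacture one good member. As $\Lambda$ is indefinite of rank $b_2\geq 5$, Meyer's theorem supplies a nonzero rational isotropic class $\alpha$ with $q(\alpha)=0$. By surjectivity of the period map I would deform $X$ to a locally trivial deformation $X'$ on which a multiple of $\alpha$ is an integral nef $(1,1)$-class; since, by hypothesis, every locally trivial deformation of $X$ satisfies the rational SYZ conjecture (Conjecture~\ref{conjecture syz}), the class $\alpha$ induces a Lagrangian fibration on $X'$. For part~(1) this already finishes the local input: the fibres are positive-dimensional complex tori, so $X'$ is non-hyperbolic. For part~(2) I additionally require $X'$ to be \emph{projective}, so that Theorem~\ref{theorem lagrangian fibration} applies and gives $d_{X'}\equiv 0$; this means demanding that $\alpha$ together with a polarisation span a rank-two sublattice of signature $(1,1)$ realised inside $\NS(X')$. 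The different Betti bounds originate here: requiring merely a rational isotropic class costs one positive direction and needs rank $\geq 5$, whereas reserving in addition a hyperbolic plane for the polarisation and still finding the isotropic, nef fibration class in generic (wall-avoiding) position forces, via Meyer applied to the residual lattice, rank $\geq 7$. Pinning down this lattice bookkeeping is a step to carry out with care.

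Next I would transport the conclusion from $X'$ to $X$. For part~(1) the set of non-hyperbolic periods is closed, by the openness of Kobayashi hyperbolicity in families (Brody's lemma, suitably adapted to the mildly singular setting), and it is $\Gamma$-invariant; it contains all periods orthogonal to some isotropic class in the $\Gamma$-orbit of $\alpha$, and these are dense in $\Omega$. A closed set containing a dense set is everything, so every locally trivial deformation, in particular $X$, is non-hyperbolic; this is the mechanism by which Campana's theorem and ergodicity combine in \cite{Cam92, Ver17}. For part~(2) the vanishing locus $\{\,d\equiv 0\,\}$ is again $\Gamma$-invariant and, by the construction above together with the upper semi-continuity of the Kobayashi diameter \cite[Corollary~1.23]{KLV14}, each set $\{\,\diam\geq\varepsilon\,\}$ is closed and disjoint from the dense family of projective, Lagrangian-fibred members; hence $\{\,d\equiv 0\,\}$ contains a dense $G_\delta$ and the pseudometric vanishes on the generic deformation.

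The hard part will be this final transport in case~(2). Upper semi-continuity bounds the diameter of a limit \emph{from below} by the nearby values, so it propagates vanishing only to a comeagre set and never upgrades $d\leq 0$ on a prescribed fibre; the argument of part~(1) succeeds precisely because non-hyperbolicity, unlike identical vanishing, is a closed condition. To reach \emph{every} $X$ I would exploit that the hypothesis is imposed on all locally trivial deformations of $X$, not merely on $X$: the non-generic periods excluded above are exactly the more algebraic ones, whose larger Néron--Severi groups should let me re-run the construction of the second paragraph \emph{inside} their own lower-dimensional deformation locus, producing a projective Lagrangian fibration there and applying Theorem~\ref{theorem lagrangian fibration} directly. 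Organising this as an induction on the Picard rank, so that the ergodicity and semicontinuity argument handles the generic stratum while the key theorem handles each special stratum, is where I expect the real work to lie, and is in the spirit of the birational contractions and cycle spaces advertised for Theorem~\ref{theorem lagrangian fibration}.
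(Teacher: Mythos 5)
Your overall architecture coincides with the paper's: Meyer's theorem plus the rational SYZ hypothesis to manufacture a Lagrangian-fibered deformation, Theorem~\ref{theorem lagrangian fibration} for the initial vanishing, upper semi-continuity of the Kobayashi diameter plus monodromy dynamics for the transport (this is Proposition~\ref{proposition usc}), and closedness of non-hyperbolicity for part (1). You also correctly diagnose the asymmetry in the transport step: since $\{s \mid \diam(\scrY_s)<\veps\}$ is open and $\Mon$-saturated, vanishing propagates to $X$ only if the \emph{fibered} period lies in $\ol{\Mon.p_X}$, so density of good periods alone never suffices. But your repair of the non-generic case fails. By Verbitsky's trichotomy (Theorem~\ref{theorem verbitsky}), failure of orbit density is governed by the rational rank $\rrk(X)=\dim_\Q\bigl(\Lambda_\Q\cap(H^{2,0}\oplus H^{0,2})\bigr)$ --- a \emph{transcendental} condition, not an algebraic one. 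Your premise that ``the non-generic periods are exactly the more algebraic ones, with larger N\'eron--Severi groups'' is false for $\rrk=1$: fixing a rational class $\lambda$ in the $(2,0)+(0,2)$ part and taking a generic real $\mu\in\lambda^\perp$, no nonzero rational class is orthogonal to the $2$-plane $\langle\lambda,\mu\rangle$, so such an $X$ has $\NS(X)=0$, is non-projective, and carries no nontrivial line bundle. There is then no lower-dimensional ``more algebraic'' deformation locus in which to re-run your construction, and no way to apply Theorem~\ref{theorem lagrangian fibration} directly to $X$ or to its $\NS$-preserving deformations; an induction on Picard rank never reaches this stratum.

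The paper instead proves the sharper bound: $d_X\equiv 0$ whenever $b_2(X)\geq 5+\rrk(X)$ (Theorem~\ref{theorem main}), which yields $b_2\geq 7$ since $\rrk\leq 2$. The role of the bound is also not what you propose --- projectivity of the auxiliary member is free, by density of projective periods \cite[Corollary~6.10]{BL22}, so no hyperbolic plane is ``reserved'' for a polarization. Rather, the $+\rrk(X)$ lets Meyer's theorem run on the orthogonal complement of the rational part of $H^{2,0}(X)\oplus H^{0,2}(X)$, a lattice of rank $b_2-\rrk(X)\geq 5$, producing an isotropic class $\alpha$ orthogonal to all transcendental rational classes. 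For $\rrk=2$ (closed orbit, maximal Picard rank, projective $X$) this $\alpha$ is already of type $(1,1)$ on $X$ itself, so the fibration exists on $X$ and no transport is needed --- consistent with the orbit being countable. For $\rrk=1$ one keeps the generator $\lambda$ and rotates only the real direction, choosing $\mu\in\Lambda_\R\cap\alpha^\perp$ with $\langle\lambda,\mu\rangle$ positive and irrational: this gives a rank-one period lying in $\ol{\Mon.p_X}$ (rank-one orbit closures are totally real submanifolds of full real dimension) on which $\alpha$ is algebraic, realized by a projective primitive symplectic $Y$ via global Torelli \cite[Theorem~1.1]{BL22}; for $\rrk=0$ density makes any choice work. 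Proposition~\ref{proposition usc} then transports $d\equiv 0$ from $Y$ to $X$, and part (1) follows from part (2) applied to the dense set of rank-zero deformations together with Theorem~\ref{Brody theorem}(2), which is essentially the closing move you describe.
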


Notice that our results are valid for singular holomorphic symplectic varieties as well; see Section~\ref{section symplectic} for the precise definitions. In fact, singular varieties are the natural context for our arguments. The proof of Theorem~\ref{theorem lagrangian fibration} for example crucially needs to pass through the singular world, even if you start with a smooth variety. For smooth varieties, the main result, Theorem~\ref{theorem main introduction}, could be proven by modifying the arguments slightly so as to (mostly) avoid singularities, but formulating and proving it for primitive symplectic varieties leads to greater clarity. 

In view of the decomposition theorem \cite[Theorem~A]{BGL22}, see also \cite[Theorem~1.5]{HP19}, it is natural to ask whether the vanishing of the Kobayashi pseudometric holds for any compact K\"ahler holomorphic symplectic variety. The following result is an easy consequence of the decomposition theorem and also justifies why we may restrict our attention to primitive (or even irreducible) symplectic varieties. 

\begin{proposition}
If the Kobayashi pseudometric vanishes for every irreducible symplectic variety, then the same holds true for any compact K\"ahler holomorphic symplectic variety.
\end{proposition}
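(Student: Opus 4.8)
The plan is to reduce the general case to the irreducible one by combining the decomposition theorem with the functorial behaviour of the Kobayashi pseudometric. First I would invoke the decomposition theorem \cite[Theorem~A]{BGL22} (see also \cite[Theorem~1.5]{HP19}): for a compact Kähler holomorphic symplectic variety $X$ there is a finite quasi-\'etale cover $p \colon \wt X \to X$ with a splitting
$$\wt X \isom T \times \prod_{i} X_i,$$
where $T$ is a complex torus and each $X_i$ is an irreducible symplectic variety. The absence of strict Calabi--Yau factors is forced by the presence of a generically nondegenerate holomorphic $2$-form, which such factors cannot support, so only the two listed types of factors occur.

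Next I would use the product property of the Kobayashi pseudometric, which holds for arbitrary complex spaces: for a product $Y \times Z$ one has
$$d_{Y \times Z}\bigl((y,z),(y',z')\bigr) = \max\bigl\{\, d_Y(y,y'),\ d_Z(z,z')\,\bigr\}.$$
Together with the standing hypothesis $d_{X_i} \equiv 0$ for every irreducible symplectic factor, this reduces the vanishing on $\wt X$ to the vanishing on the torus $T$. The latter is classical: already $d_{\C} \equiv 0$ (two points $a,b$ can be connected through the images $a/c, b/c$ under the holomorphic disks $w \mapsto cw$, whose Poincar\'e distance tends to $0$ as $c \to \infty$), hence $d_{\C^n} \equiv 0$; and since the universal cover $\C^n \to T$ is a surjective holomorphic map, the distance-decreasing property forces $d_T \equiv 0$. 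Combining these yields $d_{\wt X} \equiv 0$.

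Finally I would descend along the cover. As $p \colon \wt X \to X$ is a surjective holomorphic map of complex spaces, the distance-decreasing property gives
$$d_X\bigl(p(a),p(b)\bigr) \le d_{\wt X}(a,b) = 0 \qquad \text{for all } a,b \in \wt X,$$
and surjectivity of $p$ then forces $d_X \equiv 0$, which is the assertion.

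I do not expect any genuine obstacle here; this is precisely why it is an easy consequence of the decomposition theorem. The only points deserving (routine) care are that the product formula and the distance-decreasing property are invoked for possibly singular complex spaces rather than manifolds. Both are available at the level of complex spaces, since the Kobayashi pseudometric is defined through chains of holomorphic disks into the space without any smoothness assumption, so no modification is needed in the singular setting.
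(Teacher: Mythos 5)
Your proposal is correct and takes essentially the same route as the paper's own proof: the decomposition theorem of \cite[Theorem~A]{BGL22}, the distance-decreasing property of the finite quasi-\'etale cover, the product behaviour of the Kobayashi pseudometric, and the trivial vanishing on complex tori, all of which the paper invokes via Lemma~\ref{lemma properties hyperbolicity}. The only (harmless) difference is that you quote the exact product formula $d_{Y\times Z}=\max\{d_Y,d_Z\}$, where the one-sided estimate coming from the slice maps $w\mapsto (w,z)$ already suffices for the vanishing statement.
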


This result is proven as Proposition~\ref{proposition decomposition theorem}. As every irreducible symplectic variety is primitive symplectic, it would in particular be sufficient to get rid of the assumptions on $b_2$ and on the validity of the SYZ conjecture in Theorem~\ref{theorem main introduction}. Removing these hypotheses would however require a new idea.

\subsection{Outline of the argument}

As in \cite{KLV14}, the idea is to first prove the vanishing of the Kobayashi pseudometric for a special class of primitive symplectic varieties and then, after having obtained this ``initial'' vanishing statement, deduce the Kobayashi conjecture for all primitive symplectic varieties of the same (locally trivial) deformation type. 

While Kamenova--Lu--Verbitsky used irreducible symplectic varieties admitting two transversal Lagrangian fibrations, we show that  a single Lagrangian fibration is already sufficient. Given two transversal fibrations, the vanishing statement is an obvious consequence of the triangle inequality for the Kobayashi pseudometric. The drawback is that assuring the existence of two fibrations increases the second Betti number (although we suspect that the approach in \cite{KLV14} can be pushed to get better bounds). Improving their result to just one fibration is the main new contribution of this work and occupies the largest part of the article. We will elaborate on this part below, but let us first explain how to conclude the proof of the main result.

Assuming the SYZ conjecture, the existence of Lagrangian fibrations reduces to a lattice-theoretic question, which by Meyer's theorem has a positive answer for a lattice of rank at least $5$. Incidentally,  the ergodicity properties of periods also require the hypothesis $b_2 \geq 5$. From there we follow the argument of Kamenova--Lu--Verbitsky with some minor modifications due to the presence of singularities. Ergodicity is then used to transport the vanishing of the Kobayashi metric from varieties admitting Lagrangian fibrations to all varieties of the same locally trivial deformation type, using the aforementioned upper semi-continuity of the Kobayashi diameter; see \cite[Corollary~1.23]{KLV14}. The semi-continuity was proven for families of smooth varieties, so at this point the existence of simultaneous resolutions in locally trivial families proven in \cite[Corollary~2.27]{BGL22} comes in handy.

Coming back to the ``initial'' vanishing statement for varieties admitting a Lagrangian fibration, let us illustrate our strategy with the following simple example.

\begin{example}\label{example elliptic k3}
  Let $f\colon S\to \P^1$ be an elliptic K3 surface with a section $E\subset S$. Then $S$ is chain connected by subvarieties with vanishing Kobayashi metric; hence $d_S\equiv 0$. We are however looking for a different interpretation of this argument as, in higher dimensions, we do not want to assume our fibrations to have sections. Instead, we divide the problem in two. First, we will contract $E$ and thus obtain a birational map $\pi\colon S \to \bar S$. Let us observe that now all (images of) fibers of $f$ meet in a single point. Hence, $\bar S$ is chain connected by an irreducible family of cycles with vanishing Kobayashi pseudometric; in particular, $d_{\bar S}\equiv 0$. As a second step, we remark that, in this situation, the problem is invariant under birational maps, and thus also conclude  $d_S\equiv 0$. This point of view generalizes to higher dimensions.
\end{example}

Even though the above example is very simple, the general strategy is rather similar to the one illustrated in the example. First, we show that given a (rational) Lagrangian fibration, either there is a second one that is distinct from it, or our variety has non-trivial divisorial contractions. In the latter case, the ultimate goal is to show that the given fibration ceases to be almost holomorphic (see Definition~\ref{definition almost holomorphic}) on some birational model. Then we use cycle spaces and Campana's theorem on almost holomorphic maps to conclude that the resulting rational Lagrangian fibration on the contracted variety is chain connected by its fibers (as is the singular K3 surface $\bar S$ in Example~\ref{example elliptic k3}). As the Kobayashi pseudometric vanishes when restricted to the fibers, we infer the vanishing of the Kobayashi pseudometric of our holomorphic symplectic variety.

\subsection{Organization of the article}

In Section~\ref{section symplectic}, we recall definitions of (singular) holomorphic symplectic varieties, the Beauville--Bogomolov--Fujiki (or BBF for short) form on the second cohomology and its properties, as well as some background on Lagrangian fibrations. None of the material is new; we however carefully compile the results that lead to the proof of Matushita's theorem for primitive symplectic varieties, see Theorem~\ref{theorem matsushita}, and we discuss the relation between the different versions of the SYZ conjecture in Section~\ref{section syz}. Section~\ref{section hyperbolicity} is of preliminary nature as well and provides basic notions concerning hyperbolicity and properties of the Kobayashi pseudometric. The purpose of Section~\ref{section almost holomorphic} is to explain Campana's theorem on almost holomorphic maps and to adapt a result from \cite{GLR13} on almost holomorphic Lagrangian fibrations to primitive symplectic varieties; see Theorem~\ref{theorem glr}.

The new contributions of this article are contained in Section~\ref{section non-hyperbolicity results}. Here, our main result, Theorem~\ref{theorem main}, is proven. As explained before, we assume the second Betti number to be at least~$7$. Unlike in the smooth case, there are examples of singular primitive symplectic varieties (even $\Q$-factorial, terminal ones) with $b_2(X)$ strictly smaller than~$7$. We illustrate some of these in Section~\ref{section examples}.

\subsection*{Conventions} 
A \emph{variety} will be a reduced complex Hausdorff space which is countable at infinity.\footnote{That is, it is a countable union of compact subspaces. This property is also known as $\sigma$-compactness.} An \emph{algebraic variety} over a field $k$ is a reduced scheme that is separated and of finite type over $k$. A resolution of singularities of a variety $X$ is a proper, bimeromorphic morphism $\pi\colon Y \to X$ such that $Y$ is a smooth variety. We denote by~$\Omega_X^p$ the sheaf of holomorphic $p$-forms on $X$ and by $\Omega_X^{[p]}$ its double dual, the sheaf of \emph{reflexive} (holomorphic) $p$-forms. A complex variety $X$ is called $\Q$-factorial if for every reflexive sheaf $L$ on $X$ of rank~$1$, there is a positive integer $n$ such that the double dual~$(L^{\tensor n})^{\vee\vee}$ is invertible.

\subsection*{Acknowledgments.} 
We would like to thank Ben Bakker for helpful conversations around the Kobayashi pseudometric, St\'ephane Druel and Daniel Greb for helpful discussions improving Section~\ref{section symplectic}, Ariyan Javanpeykar for pointing out Example~\ref{example ariyan} to us, Giovanni Mongardi for Example~\ref{Giovanni example}, Steven Lu for the reference on length functions on complex spaces, and Claire Voisin for pointing out a confusion in one of the arguments. We are grateful to the referee for their thorough reading and for drawing our attention to a few inaccuracies in the original manuscript.

\section{Holomorphic symplectic varieties}\label{section symplectic}

This section provides a brief recollection of holomorphic symplectic varieties. Let us begin by recalling the notion of an irreducible symplectic manifold.

\begin{definition}\label{definition irreducible symplectic manifold}
An \emph{irreducible symplectic manifold} is a connected compact complex K\"ahler manifold $M$ satisfying $\pi_1(M)=0$ and $H^{2,0}(M)={\mathbb C} \sigma$, where $\sigma$ is a holomorphic symplectic form.
\end{definition}

These manifolds are sometimes referred to as \emph{compact hyperk\"ahler manifolds}, which is an equivalent concept. Indeed, in every K\"ahler class on an irreducible symplectic manifold, there is a unique hyperk\"ahler metric (\textit{i.e.}, with holonomy equal to $\Sp(n)$) by Yau's theorem. Conversely, a compact hyperk\"ahler manifold is irreducible symplectic for a $\P^1$ family of complex structures.

Let us now come to \emph{singular} holomorphic symplectic varieties.

\begin{definition}\label{definition symplectic}
A \emph{primitive symplectic variety} is a normal compact K\"ahler variety $X$ with rational singularities such that $H^1(X,\sO_X)=0$ and $H^0(X,\Omega_X^{[2]})=\C\sigma$ for a holomorphic symplectic\footnote{A reflexive $2$-form is called \emph{symplectic} if its restriction to the regular part is.} form $\sigma$.

An \emph{irreducible symplectic variety} is a normal compact K\"ahler variety $X$ with rational singularities such that for each finite, quasi-\'etale\footnote{Recall that \emph{quasi-\'etale} means \'etale in codimension~$1$.} cover $\pi\colon X'\to X$, the algebra $\Gamma(X',\Omega_{X'}^{[\bullet]})$ of global reflexive holomorphic forms is generated by a symplectic form $\sigma'\in \Gamma(X',\Omega_{X'}^{[2]})$.
\end{definition}

The notion of an irreducible symplectic variety is due to Greb--Kebekus--Peternell; see \cite[Definition~8.16]{GKP16}, where we just replaced the projectivity assumption by the requirement for $X$ to be compact K\"ahler. Irreducible symplectic is more restrictive than primitive symplectic and serves a different purpose: Irreducible symplectic varieties are one of the three fundamental building blocks in the decomposition theorem (see \cite{HP19, BGL22} and references therein), whereas for primitive symplectic varieties, moduli theory essentially works as in the smooth case (see \cite{BL21, BL22}). 

\subsection{Deformations of holomorphic symplectic varieties}\label{section symplectic defo}

We briefly discuss locally trivial deformations, especially for primitive symplectic varieties. For details and further references, we refer to \cite[Section~4]{BL22}. 

\begin{definition}\label{definition locally trivial deformation}
Let $f\colon \scrX \to S$ be a \emph{deformation} of a compact complex space $X$, that is, a flat and proper morphism of complex spaces with target a connected complex space $S$ such that $X=f^{-1}(0)$ for a distinguished point $0\in S$. Such a deformation is \emph{locally trivial} if for every $x \in X$ there exist open neighborhoods $\scrU \subset \scrX$ of $x$ and $V \subset S$ of $0$ such that $\scrU \isom (\scrU \cap X) \times V$ over $S$. 
\end{definition}

Note that every deformation of a compact complex manifold is locally trivial. Moreover, locally trivial deformations $\scrX \to S$ are globally trivial in the real analytic category after shrinking the base $S$; see \cite[Proposition~5.1]{AV21}. In particular, they are topologically trivial.

Let $\scrX \to \Def(X)$ be the miniversal deformation of $X$ (in the sense of space germs). The base $\Def(X)$ of this deformation is referred to as the \emph{Kuranishi space} of $X$. By \cite[(0.3) Corollary]{FK87}, there is a maximal closed subspace $\Def^\lt(X)\subset \Def(X)$ parametrizing locally trivial deformations and the restriction of the miniversal family to this subspace is miniversal for locally trivial deformations of $X$. Further recall that the tangent space to $\Def^\lt(X)$ at the distinguished point $0$ is isomorphic to $H^1(X,T_X)$ and that every miniversal locally trivial deformation of $X$ is universal if $H^0(X,T_X)=0$. For primitive symplectic varieties, the following result summarizes some fundamental properties of locally trivial deformations.

\begin{theorem}\label{corollary symplectic locally trivial}
Let $X$ be a primitive symplectic variety. Then $X$ admits a universal locally trivial deformation $\scrX \to \Def^\lt(X)$. Moreover, $\Def^\lt(X)$ is smooth of dimension $h^{1,1}(X)$, all fibers are again primitive symplectic $($after possibly shrinking $\Def^\lt(X))$, and the universal deformation remains universal for any of its fibers.
\end{theorem}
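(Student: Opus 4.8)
The plan is to prove Theorem~\ref{corollary symplectic locally trivial} by assembling several foundational results about primitive symplectic varieties, most of which are due to Bakker--Lehn and to Namikawa, and to organize them into the four assertions (existence of a universal locally trivial deformation, smoothness of the base of dimension $h^{1,1}(X)$, preservation of the primitive symplectic condition, and persistence of universality along fibers). I expect this to be less a single hard argument than a careful compilation, with the smoothness statement --- an unobstructedness, or ``$T^1$-lifting''/Bogomolov--Tian--Todorov-type result --- being the technical heart.

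\textbf{Existence of a universal deformation.} First I would recall from the discussion preceding the statement that by \cite[(0.3)~Corollary]{FK87} the locus $\Def^\lt(X)\subset \Def(X)$ of locally trivial deformations exists as a maximal closed subspace, and that the restricted family is miniversal for locally trivial deformations. To upgrade miniversality to universality it suffices to show $H^0(X,T_X)=0$, i.e. that $X$ has no nonzero global vector fields. Since $X$ is a primitive symplectic variety, the symplectic form $\sigma\in H^0(X,\Omega_X^{[2]})$ induces an isomorphism $T_X\isom \Omega_X^{[1]}$ on the regular locus, so $H^0(X,T_X)\iso H^0(X,\Omega_X^{[1]})$; the latter vanishes because $H^1(X,\sO_X)=0$ together with Hodge-theoretic symmetry for the rational-singularity setting forces $h^{1,0}=0$ (alternatively, reflexive $1$-forms on a primitive symplectic variety vanish, as the algebra of reflexive forms is generated in even degree). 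Hence the miniversal locally trivial family is universal.

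\textbf{Smoothness and dimension of the base.} This is the main obstacle. The tangent space to $\Def^\lt(X)$ at $0$ is $H^1(X,T_X)\iso H^1(X,\Omega_X^{[1]})$, which has dimension $h^{1,1}(X)$ by definition, so the dimension statement follows \emph{once} smoothness is established. For smoothness I would invoke the unobstructedness of locally trivial deformations of primitive symplectic varieties, which is exactly \cite[Section~4]{BL22} (a singular Bogomolov--Tian--Todorov theorem). The argument there runs via a $T^1$-lifting criterion: one shows that the obstruction maps in the locally trivial deformation theory vanish, using that reflexive differentials on $X$ behave cohomologically like honest differentials thanks to the rational singularities hypothesis and the extension theorems of Greb--Kebekus--Kovács--Peternell, and using the contraction by $\sigma$ to identify the relevant $\Ext$ and cohomology groups. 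I would cite this rather than reprove it, since reproducing the $T^1$-lifting computation is precisely the routine-but-lengthy part I am instructed to avoid.

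\textbf{Preservation of the primitive symplectic condition and persistence of universality.} For the third claim, I would argue that the defining numerical and sheaf-theoretic invariants are open (or deformation-invariant) in a locally trivial family: local triviality and topological triviality (as noted after Definition~\ref{definition locally trivial deformation}, via \cite[Proposition~5.1]{AV21}) keep the fibers normal, compact, and Kähler after shrinking, and preserve the rational singularity condition, while the Hodge numbers $h^{0,1}$ and $h^{0}(\Omega^{[2]})$ are constant in such families --- again by the reflexive Hodge theory of \cite{BL22}. Thus $H^1(\sO)=0$ and $H^0(\Omega^{[2]})=\C\sigma$ persist, so every nearby fiber is again primitive symplectic. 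For the final assertion, the same vanishing $H^0(X_t,T_{X_t})=0$ holds for each fiber $X_t$ by the argument above, and since the restricted family $\scrX\to\Def^\lt(X)$ is everywhere miniversal for locally trivial deformations (miniversality being an open condition and the Kuranishi family being versal at each nearby point), the vanishing of global vector fields upgrades it to a universal family for each of its fibers. This completes all four parts.
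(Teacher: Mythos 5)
Your proposal is correct and is essentially the paper's own proof unpacked: the paper's entire proof of this theorem is the citation ``This is Lemma~4.6, Theorem~4.7, and Corollary~4.11 of \cite{BL22}'', and your four steps --- existence via \cite{FK87} together with $H^0(X,T_X)=0$, smoothness via the singular Bogomolov--Tian--Todorov/$T^1$-lifting theorem of \cite[Section~4]{BL22}, and openness of the primitive symplectic condition and of universality via local triviality, \cite{AV21}, and reflexive Hodge theory --- reproduce precisely the content of those cited results. The only quibble is your parenthetical alternative justification for $H^0(X,\Omega_X^{[1]})=0$: generation of the reflexive form algebra in even degree is the defining property of \emph{irreducible}, not primitive, symplectic varieties (Definition~\ref{definition symplectic}), but your primary argument ($h^{0,1}=0$ plus Hodge symmetry transported through a resolution using rational singularities) is the correct one for the primitive case.
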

\begin{proof}
This is Lemma~4.6, Theorem~4.7, and Corollary~4.11 of \cite{BL22}.
\end{proof}


\subsection{The Bogomolov--Beauville--Fujiki form}\label{section bbf form}

Given a primitive symplectic variety $X$, there is the \emph{Beauville--Bogomolov--Fujiki} (\emph{BBF}\,) \emph{form}
\[
q_X\colon H^2(X,\Z) \lra \Z, 
\]
which is a quadratic form that generalizes the intersection pairing for K3 surfaces. As in the smooth case, it carries a lot of information about the variety in question. We refer to \cite[Section~5]{BL22} for the explicit formula defining it, several basic results (such as the proof that it is actually an \emph{integral} quadratic form), and for references to many earlier partial results about this form. Here, we content ourselves with listing its most important properties.

\begin{lemma}\label{lemma bbf form}
  The BBF form $q_X$ on a primitive symplectic variety $X$ has the following properties:
\begin{enumerate}
\item It is invariant under locally trivial deformation.
\item It is non-degenerate of signature $(3,b_2(X)-3)$.
\item\label{lbf-3} On the real space underlying $H^{2,0}(X)\oplus H^{0,2}(X)$, the form is positive definite.
\item The orthogonal complement of\, $(H^{2,0}(X)\oplus H^{0,2}(X))$ equals $H^{1,1}(X)$.
\item\label{lbf-5} The Fujiki relation holds; i.e., there is a constant $c\in \Z$, which is invariant under locally trivial deformation, such that $$\int_X \alpha^{2n}=c\cdot q_X(\alpha)^n$$ for any $\alpha \in H^2(X, {\mathbb Z})$.
\end{enumerate}
\end{lemma}
\begin{proof}
We again refer to \cite[Section~5]{BL22} and the references therein, in particular, Lemmas~5.3 and~5.7.
\end{proof}

\noindent
Notice that the restriction of the form $q_X$ to $H^{1,1}(X, \mathbb R)$ has signature \mbox{$(1,b_2(X)-3)$} because of Lemma~\ref{lemma bbf form}\eqref{lbf-3}. Therefore, the cone $$\{ \alpha \in H^{1,1}(X, {\mathbb R}) ~|~ q_X(\alpha) > 0 \} $$ has two connected components. 

\begin{definition}
The positive cone $C_X \subset H^{1,1}(X, \mathbb R)$ of a primitive symplectic variety~$X$ is the connected component of the cone $\{ \alpha \in H^{1,1}(X, {\mathbb R}) ~|~ q_X(\alpha) > 0 \} $ containing the K\"ahler cone $\sK_X$ of $X$. 
\end{definition}

\begin{definition}\label{definition picard cones}
For a primitive symplectic variety $X$, let $\Pic (X)_\R$ be the real Picard group $\Pic (X) \otimes \mathbb R$. Inside $\Pic (X)_\R$, we consider the following cones:
\begin{itemize}
    \item The \emph{ample cone} $\Amp (X)$ of $X$ is the cone generated by all ample (integral) Cartier divisors on X. 
    \item The \emph{nef cone} $\Nef(X)$ of $X$ is the intersection of the closure of the K\"ahler cone~$\sK_X$ with the real Picard group $\Pic(X)_\R$.
    \item The \emph{movable cone} $\Mov(X)$ of $X$ is the cone generated by all movable line bundles (\textit{i.e.}, whose linear system is non-empty and has no fixed part) in $\Pic (X)_\R$. We denote its closure by $\Movbar(X)$. 
\end{itemize}
\end{definition}

\noindent Note that, in general, the movable cone is neither open nor closed. Also, our definition of the nef cone is slightly non-standard, for usually it is defined as the closure of the ample cone. If $X$ is projective, both definitions coincide. If however the ample cone is zero, there can still be non-trivial nef line bundles. As an example, take a primitive symplectic variety of Picard rank~$1$ admitting a Lagrangian fibration.

\begin{definition}
Let $N_1(X)_\R$ denote the space of $1$-cycles (with real coefficients) modulo numerical equivalence. We furthermore define the cone $\NE(X)\subset N_1(X)_\R$ to be the cone generated by the classes of effective $1$-cycles and let $\NEbar(X)$ denote its closure. The cone $\NEbar(X)$ is called the \emph{Mori cone} of $X$. 
\end{definition}

\subsection{Lagrangian fibrations}\label{section lagrangian fibrations}

A subvariety $Y$ of a holomorphic symplectic manifold $(X,\sigma)$ is \emph{Lagrangian} if $\dim Y=\frac{1}{2} \dim X$ and the restriction of $\sigma$ to the regular locus $Y^\reg$ vanishes. This is equivalent to saying that the pullback of $\sigma$ to a resolution of singularities of $Y$ vanishes. For singular $X$, one can extend this notion in an obvious way to subvarieties not contained in the singular locus $X^\sing$. However, as by definition all our symplectic varieties have rational singularities, we can do better. Thanks to \cite[Theorem~1.10]{KS21}, there is a functorial pullback for reflexive differentials. Hence, we can define Lagrangian subvarieties in full generality.

\begin{definition}\label{definition lagrangian}
Let $(X,\sigma)$ be a holomorphic symplectic variety. A subvariety $Y \subset X$ is called \emph{Lagrangian} if $\dim Y=\frac{1}{2} \dim X$ and the Kebekus--Schnell pullback of $\sigma$ to a resolution of singularities of $Y$ vanishes.
\end{definition}

The following theorem is due to Matsushita in the smooth case; see \cite{Mat99,Mat99add, Mat00, Mat03}, and Hwang~\cite{Hwa08} for the last statement. Subsequently, results for singular varieties were obtained by Matsushita \cite{Mat15} and Schwald \cite{Sch20}. We summarize their results and include a sketch of a proof, in part because some of the results hold in greater generality than originally stated.

\begin{theorem}\label{theorem matsushita}
Let $X$ be a primitive symplectic variety of dimension $2n$, and  let $f\colon  X \rightarrow B$ be a surjective holomorphic map with connected fibers to a normal K\"ahler variety $B$ with $0<\dim B < 2n$. Then the following hold: 
\begin{enumerate}
    \item\label{tm-1} The base $B$ is a projective variety with Picard rank $\vrho(B)=1$; in particular, $B$ is projective and has  $\Q$-factorial, log terminal singularities. Furthermore, $\dim B=n$.
    \item\label{tm-2} The morphism $f$ is equidimensional, and each irreducible component of each fiber of $f$ endowed with the reduced structure is a Lagrangian subvariety. The singular locus $X^\sing$ does not surject onto $B$.
    \item\label{tm-3} All smooth fibers are abelian varieties.
    \item\label{tm-4} If, in addition, $X$ is irreducible symplectic, then $B$ is Fano. Moreover, if\, $B$ is smooth, then $B\isom \P^n$.
    \end{enumerate}
\end{theorem}
\begin{proof}
The argument in \cite{Mat03} shows that $B$ is actually projective by first showing that it has log terminal, hence rational, singularities, and then that it is Moishezon. As in \cite{Mat99add}, one shows that the general fiber of $f$ is Lagrangian (and hence a complex torus of dimension $n$) so that $\dim B = n$. With the argument of \cite{Mat99}, one deduces that $B$ is $\Q$-factorial of Picard rank~$1$.

For~\eqref{tm-2}, let $\rho\colon Y \to X$ be a resolution of singularities. By \cite[Theorem~2.1]{Kol86}, and \cite[Theorem~2.3, Remark~2.9]{Sai90} in the analytic case, the derived direct images $R^i(f\circ\rho)_*\omega_Y$ are torsion-free for $i\geq 0$. As $X$ has rational singularities, we have $R\rho_*\omega_Y=\omega_X$, so  the  $R^if_*\omega_X$ are also torsion-free. From there, the proof of equidimensionality and Lagrangeness is essentially the same as \cite[Theorem~1]{Mat00}. 

To see that $X^\sing$ does not dominate the base, we adapt the proof of Matsushita's ``Theorem of Matreshka''; see \cite[Theorem~3.1]{Mat15}. The crucial point is that the singular locus of a holomorphic symplectic variety is a Poisson subvariety (for the Poisson structure induced on $X$ by the symplectic form); see \cite[Theorem~2.3]{Kal06b} and also \cite[Theorem~3.4]{BL22} for an adaption to the complex analytic setting. We consider the diagram
\[
\xymatrix{
X_1 \ar[r] \ar[d]_{f_1} & X \ar[d]^{f} \\
B_1 \ar[r] & B\rlap{,} \\
}
\]
where we denote by $X_1$ the normalization of $X^\sing$ and by $B_1$ the normalization of $f(X^\sing)$. Then $f$ being Lagrangian implies that pullbacks of functions in $\sO_B$ Poisson commute. As $X_1$ is a Poisson subvariety, the Poisson structure is compatible with restriction, so the $f_1$-pullbacks of functions in $\sO_{B_1}$ also Poisson commute. Hence, coordinate functions around a smooth point of $B_1$ give $\dim B_1$ linearly independent Hamiltonian vector fields whose action preserves the fibers of $f_1$. Therefore, the fibers of $f_1$ have dimension at least $\dim B_1$. In particular, $\dim B_1 < \dim B$, which implies the claim.

From the classical theory of integrable systems, it follows that the smooth fibers are complex tori. The projectivity statement in~\eqref{tm-3} follows as in the smooth case by Voisin's argument; see \cite[Proposition~2.1]{Cam06}. 

\eqref{tm-4}~ The proof of this part is essentially identical to the proof of \cite[Lemma~2.2]{Mat03}. For the existence of a singular K\"ahler--Einstein metric on $X$ that is smooth on the regular part $X^\reg$, we refer to \cite[Theorem~A]{EGZ09} and \cite[Corollary~1.1]{Pau08}. The last statement has been proven by Hwang \cite{Hwa08} if the total space $X$ is smooth and projective, building on work of Matsushita \cite{Mat05}. For singular projective $X$, this is due to Matsushita \cite{Mat15}; see also \cite{Sch20}. Finally, Greb and the second-named author treated the case of a smooth K\"ahler total space $X$ in \cite{GL14}. Their argument, however, works literally the same if $X$ is singular and K\"ahler.
\end{proof}

\begin{definition}\label{definition lagrangian fibration}
Let $X$ be a primitive symplectic variety. A map $f\colon X \to B$ as in Theorem~\ref{theorem matsushita} is called a ({\it holomorphic}\,) {\it Lagrangian fibration}. A \emph{rational Lagrangian fibration} is a meromorphic map $f\colon X \ratl B$ to a normal K\"ahler variety $B$ such that $f$ has connected fibers\footnote{Recall that a \emph{fiber} of a meromorphic map $f\colon X\ratl B$ is the Zariski closure of a fiber of the restriction of $f$ to its domain of definition. In particular, fibers are always compact if $X$ is.} and its general fiber is a Lagrangian subvariety of $X$. For a movable line bundle $L$, we say that a (rational) Lagrangian fibration $f$ is induced by~$L$ if $f$ is the map associated to the linear system of $L^{\tensor n}$ for all $n\gg 0$.
\end{definition}

\begin{remark}\label{remark fibration base locus}
It is convenient to use the term ``induced rational map of $L$'' to refer to the map defined by the linear system of a high enough \emph{multiple} of $L$. Then, however, it becomes crucial to require $L$ to be movable. Consider for example an elliptic K3 surface $f\colon S \to \P^1$ with a section, as in Example~\ref{example elliptic k3}. We denote by $\ell$ the class of a fiber and by $\sigma$ the class of a section. Then the linear system of $\ell+\sigma$ has $\sigma$ as a fixed component. According to Definition~\ref{definition lagrangian fibration}, the fibration $f$ is however not \emph{induced} by $\ell+\sigma$ as this is actually a big line bundle. Note that $2\ell + \sigma$ is big and nef.
\end{remark}

Clearly, a holomorphic Lagrangian fibration is always induced by a line bundle, more precisely, by the pullback of any ample bundle on the base. Let us recall that the pullback of a line bundle $M$ on $B$ along a rational map $f\colon X\ratl B$ is defined by taking a resolution of indeterminacy
\[
\xymatrix@R=3mm{
&\tilde X \ar[dl]_\pi\ar[dr]^{\tilde f}&\\
X \ar@{-->}[rr]^f && B\\
}
\]
and putting
\begin{equation}\label{eq rational pullback}
f^*M:=\left(\pi_*\tilde f^* M\right)^{\vee\vee}.
\end{equation}
In general, $f^*M$ is only a reflexive rank~$1$ sheaf. If $X$ is $\Q$-factorial, taking a (reflexive) tensor power of this construction gives a line bundle on $X$. Still, the question whether a rational Lagrangian fibration is induced by a line bundle is a bit subtle, as the following example shows.

\begin{example}
Assume $X$ is $\Q$-factorial, and let the line bundle $L$ on $X$ be given by the pullback of an ample $A\in \Pic(B)$ along $f\colon X\ratl B$ as in \eqref{eq rational pullback}. Then this need not induce the fibration in the sense of Definition~\ref{definition lagrangian fibration}. By reflexivity, it is clear that $f$ is the map associated to the linear system $f^*\abs{A}=\abs{L}$ (this last equality follows from $f$ having connected fibers), but multiples of $L$ might define a different map. Consider for example the singular elliptic K3 surface $\bar S \ratl \P^1$ from Example~\ref{example elliptic k3}. We discussed that $f^*\sO(1)$ is ample in that case. 

Another potential obstruction for a rational Lagrangian fibration $f\colon X \ratl B$ to be induced by a line bundle is non-$\Q$-factoriality. It would be interesting to have an explicit example $f$ where non-$\Q$-factoriality of $X$ obstructs the pullback of an ample line bundle on $B$ from being a ($\Q$-)line bundle. 
\end{example}

Observe that for a \emph{holomorphic} Lagrangian fibration $f\colon X \rightarrow B$, the pullback of an ample line bundle $A$ on $B$ satisfies $q_X(f^*A)=0$. This is a direct consequence of the Fujiki relation from Lemma~\ref{lemma bbf form}\eqref{lbf-5}. It turns out that (in the projective case) rational Lagrangian fibrations are not that far apart from holomorphic ones.

\begin{lemma}\label{lemma lmp}
Let $X$ be a projective primitive symplectic variety with $b_2(X) \geq 5$. Let $L$ be a $($movable\,$)$ line bundle on $X$ inducing a rational Lagrangian fibration and satisfying $q_X(L)=0$. Then there exist a  birational map $\phi\colon  X \ratl X'$ to a primitive symplectic variety $X'$ with $\Q$-factorial terminal singularities and a holomorphic Lagrangian fibration $f'\colon X'\to B$ such that the birational transform of\, $L$ is the pullback of an ample line bundle on $B$.
\end{lemma}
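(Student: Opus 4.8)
The plan is to run the minimal model program (MMP) on $X$ relative to the rational Lagrangian fibration $f\colon X\ratl B$ and to show that after finitely many steps we reach a birational model on which the induced fibration becomes holomorphic. Since $L$ is movable with $q_X(L)=0$, the rational map induced by $L$ contracts no divisor (movability means the linear system has no fixed part), so I would first pass to a $\Q$-factorial terminal model using the existence of such models for primitive symplectic varieties (this is where singular MMP for klt/terminal pairs enters). After this reduction I may assume $X$ is $\Q$-factorial and terminal, and $L$ remains movable with $q_X(L)=0$. The strategy is then to study the birational geometry of the movable cone $\Mov(X)$: the ray spanned by $L$ lies on its boundary (being isotropic for $q_X$), and running the MMP amounts to crossing the chamber decomposition of $\Movbar(X)$ into nef cones of the various birational models of $X$.

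The key steps, in order, would be as follows. First, using the nondegeneracy of $q_X$ and its signature $(1,b_2(X)-3)$ on $H^{1,1}(X,\R)$ together with the hypothesis $b_2(X)\geq 5$, I invoke a Meyer-type / lattice-theoretic argument to control the geometry of the isotropic class $L$ relative to the rational polyhedral structure of $\Movbar(X)$ (here $b_2(X)\geq 5$ is exactly what is needed). Second, since $L$ is movable and isotropic, the chamber of $\Movbar(X)$ whose closure contains $[L]$ corresponds to some birational model $\phi\colon X\ratl X'$ on which the transform $L'=\phi_*L$ becomes nef; concretely one runs a suitable $L$-trivial MMP (or a sequence of flops across the movable cone) so that $[L']$ lands in $\Nef(X')$. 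Third, because $q_{X'}(L')=q_X(L)=0$ (the BBF form being a birational and locally-trivial-deformation invariant, Lemma~\ref{lemma bbf form}), Matsushita's theorem, Theorem~\ref{theorem matsushita}, applies: a nef, isotropic, movable class on a projective primitive symplectic variety gives rise to a genuine holomorphic Lagrangian fibration $f'\colon X'\to B$, and by the Fujiki relation the divisor $L'$ is (a multiple of) the pullback of an ample line bundle on $B$. Since the rational map induced by $L$ agrees with that induced by $L'$ under $\phi$, the fibration $f'$ is the holomorphic model of the original $f$.

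I expect the main obstacle to be establishing that the relevant MMP terminates at a model $X'$ where $[L']$ is actually nef, rather than merely on the boundary of some intermediate chamber where it stays isotropic but non-semiample. This requires the full strength of the theory of movable cones and their polyhedral chamber structure for singular primitive symplectic varieties—namely that $\Movbar(X)$ (intersected with the positive cone) decomposes into the nef cones of $\Q$-factorial terminal birational models, each a locally rational polyhedral chamber, and that a movable isotropic ray is nef on at least one of these models. The delicate point is the passage from the smooth hyperkähler case, where these results are classical (Markman, Hassett–Tschinkel), to the singular primitive symplectic setting; I would rely on the deformation theory and cone theorems recorded in \cite{BL22} and the birational geometry of symplectic varieties developed there, and I would need to verify carefully that semiampleness of the nef isotropic class $L'$ holds, which is where Matsushita's theorem together with the $\Q$-factoriality and terminality of $X'$ does the decisive work.
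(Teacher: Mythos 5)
Your first two steps track the paper's proof: passing to a $\Q$-factorial terminalization via \cite[Corollary~1.4.3]{BCHM10}, then finding a birational model $X'$, isomorphic to $X$ in codimension one, on which the transform $L'$ of $L$ becomes nef. But note two discrepancies in how you get there. The paper does not run an MMP at all — the termination problem you correctly flag as the main obstacle is genuinely open in this generality, and the paper sidesteps it by quoting the rational polyhedral fundamental domain for the action of the birational automorphism group on $\Mov^+(X)$ from \cite[Theorem~1.2]{LMP22a}, extracting from its proof a rational polyhedral cone containing $L$ and contained in the nef cone of a model $X'$. This is also exactly where the hypothesis $b_2(X)\geq 5$ enters; it is not a ``Meyer-type'' lattice argument (Meyer's theorem is used elsewhere in the paper, in the proof of Theorem~\ref{theorem main}, to \emph{produce} an isotropic class — here $L$ is already given isotropic). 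Remark~\ref{remark rational lagrangian fibration} confirms that your MMP route only works when termination is known (smooth $X$ by \cite{LP16}, hyperquotient singularities by \cite{LMP22b}), in which cases one can even drop $b_2\geq 5$.

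The genuine gap is in your final step. You assert that a nef, isotropic, movable class on a projective primitive symplectic variety ``gives rise to a genuine holomorphic Lagrangian fibration'' via Matsushita's theorem, and later that semiampleness of $L'$ is ``where Matsushita's theorem does the decisive work.'' Theorem~\ref{theorem matsushita} does no such thing: it describes the structure of an \emph{already given} holomorphic fibration. The statement that a nef isotropic class induces a Lagrangian fibration is precisely the SYZ conjecture (Conjecture~\ref{conjecture syz}), which is open and cannot be invoked — the entire point of Lemma~\ref{lemma lmp} is to get a holomorphic model \emph{without} assuming SYZ. The paper closes this step by exploiting the hypothesis that $L$ already induces a rational Lagrangian fibration: this gives Kodaira--Iitaka dimension $\kappa(L)=n$, hence $\kappa(L')=n$ since $\phi$ is an isomorphism in codimension one; isotropy of $L'$ for $q_{X'}$ forces the numerical Kodaira dimension to be $n$ as well, so $L'$ is nef and abundant, and Kawamata's semiampleness theorem \cite[Theorem~6.1]{Kaw85} (see also \cite[Theorem~1.1]{Fuj11}) then produces the morphism $f'\colon X'\to B$ with $L'$ the pullback of an ample bundle. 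Without routing the hypothesis $\kappa(L)=n$ through abundance in this way, your argument rests on an open conjecture and fails.
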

\begin{proof}
By taking a $\Q$-factorial terminalization of $X$, see \cite[Corollary~1.4.3]{BCHM10}, and pulling back the line bundle, we may assume that $X$ itself has $\Q$-factorial terminal singularities. By \cite[Theorem~1.2]{LMP22a}, there is a rational polyhedral fundamental domain for the action of the group of birational automorphisms of~$X$ on~$\Mov^+(X)$. Here, $\Mov^+(X)$ is defined as the convex hull of $\Movbar(X)\cap \Pic(X)_\Q$. From the proof, we deduce that there is a rational polyhedral cone $C^+\subset \Mov^+(X)$ containing $L$ and being contained in the nef cone of a birational model $X'$ of $X$. As both $X'$ and $X$ have $\Q$-factorial terminal singularities, they are isomorphic in codimension~$1$, and the pullback $L'$ of $L$ to $X'$ is still isotropic for the BBF form on $X'$. By assumption, the Kodaira--Iitaka dimension $\kappa(L)$ of $L$ is $n:=\dim X /2$, hence so is $\kappa(L')$. Since $L'$ is  $q_{X'}$-isotropic, its the numerical Kodaira dimension is also equal to $n$, so $L'$ is nef and abundant, and the claim follows from Kawamata's theorem \cite[Theorem~6.1]{Kaw85}; see also \cite[Theorem~1.1]{Fuj11}.
\end{proof}

\begin{remark}\label{remark rational lagrangian fibration}
If we have the MMP (that is, termination of flips) at our disposal, we can argue differently in the first part of Lemma~\ref{lemma lmp}.  Indeed, if $f$ is induced by a linear system $\abs{D}$ of a Cartier divisor $D$ on $X$, one can obtain $X'$ as in the definition by running a log-MMP for $(X,\Delta)$, where $\Delta$ is a general element in $\abs D$. Note that flips terminate if $X$ is smooth by \cite{LP16} or more generally if $X$ has hyperquotient singularities by~\cite{LMP22b}. In these cases, we can in particular drop the assumption $b_2(X)\geq 5$.

It is also likely that we can drop the projectivity assumption in Lemma~\ref{lemma lmp}. For K\"ahler  irreducible symplectic manifolds, for example, it also follows from the fact that the birational K\"ahler cone coincides with the (closure of the) fundamental exceptional chamber; see \cite[Theorem~1.5]{Mar11}. 
\end{remark}

\subsection{The SYZ conjecture}\label{section syz}

The SYZ conjecture is one of the most important conjectures about primitive symplectic varieties and is wide open in general. Note, however, that it is known in all known smooth examples; see Remark~\ref{remark syz smooth case} below. Before we state it, let us recall that given a Lagrangian fibration $f\colon X \rightarrow B$, the pullback of an ample class $A$ on $B$ satisfies $q_X(f^*A)=0$ and induces $f$ in the sense of Definition~\ref{definition lagrangian fibration}.

\begin{conjecture}[SYZ]\label{conjecture syz}
If $L$ is a non-trivial nef line bundle on a primitive symplectic variety $X$ with $q_X(L)=0$, then $L$ induces a holomorphic Lagrangian fibration.
\end{conjecture}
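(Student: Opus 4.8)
The plan is to reduce the statement to an abundance-type non-vanishing theorem for $L$ and then feed the outcome into the structural results already assembled. First I would observe that a nonzero nef class $L$ with $q_X(L)=0$ automatically has numerical dimension $n:=\tfrac{1}{2}\dim X$. Indeed, extending the Fujiki relation of Lemma~\ref{lemma bbf form}~(5) to $\R$-coefficients and polarizing it against a Kähler class $\omega$ expresses $\int_X L^k\omega^{2n-k}$ as a fixed nonzero constant times a sum of products of the values $q_X(L)$, $q_X(\omega)$, $q_X(L,\omega)$; every term of $L$-degree $>n$ carries a factor $q_X(L)=0$, while in degree $k=n$ the only surviving term is a positive multiple of $q_X(L,\omega)^n$. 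Since $L$ lies in the closure of the positive cone $C_X$ and $\omega\in C_X$, the reverse Cauchy--Schwarz inequality for the Lorentzian form on $H^{1,1}(X,\R)$ gives $q_X(L,\omega)>0$, so $\int_X L^n\omega^n>0$ and the numerical dimension of $L$ equals $n$.

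The conjecture is therefore equivalent to the abundance statement that the Kodaira--Iitaka dimension $\kappa(L)$ equals $n$. Granting it, $L$ becomes nef and abundant, hence semiample by Kawamata's theorem \cite[Theorem~6.1]{Kaw85} (see also \cite{Fuj11}), so that the associated morphism $f\colon X\to B$ to a normal projective variety has $\dim B=\kappa(L)=n$ and realizes $L$ as the pullback of an ample class. Since $0<\dim B<2n$, Theorem~\ref{theorem matsushita} identifies $f$ as a holomorphic Lagrangian fibration inducing $L$, which is exactly the assertion. In short, all of the content is concentrated in showing that the sections of the powers $L^{\tensor m}$ grow like $m^n$; note that Lemma~\ref{lemma lmp} already performs the analogous passage on a birational model once $\kappa(L)=n$, i.e.\ a rational fibration, is known.

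The main obstacle is precisely this non-vanishing, and it is the point where the problem is genuinely open. The natural attack is deformation-theoretic: by Theorem~\ref{corollary symplectic locally trivial} one may move $X$ through its locally trivial family keeping the class of $L$ algebraic, with $q_X$ and the condition $q_X(L)=0$ preserved by Lemma~\ref{lemma bbf form}~(1), and then specialize to a member on which $L$ visibly induces a fibration---for instance a moduli space of Bridgeland-stable objects on a K3 or abelian surface, where a Fourier--Mukai transform produces one directly. The trouble is that neither nefness nor $h^0(X,L^{\tensor m})$ is deformation invariant: cohomology is only upper semicontinuous, which is the wrong direction for manufacturing sections, and a nef isotropic class may leave the nef cone under deformation. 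Ergodicity of the monodromy action on the period domain (which needs $b_2(X)\geq 5$) shows that the locus of periods admitting such a fibration is either null or co-null, but turning this dichotomy into the existence of sections on the \emph{given} $X$ is exactly what is missing in general. This is why the main theorems of the paper must assume the (rational) SYZ conjecture rather than establish it.
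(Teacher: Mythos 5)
You have correctly recognized that this statement is a \emph{conjecture}: the paper offers no proof of it, only the observation (Remark~\ref{remark syz smooth case}) that it is known for all known smooth deformation types by work of Bayer--Macr\`i, Markman, Yoshioka, Mongardi--Rapagnetta and Mongardi--Onorati, and otherwise assumes it (or its rational variant) as a hypothesis in the main theorems. So there is no paper proof to compare against, and your honest conclusion that the content is concentrated in the open non-vanishing statement $\kappa(L)=n$ is exactly the right assessment. Moreover, your reduction is precisely the mechanism the paper itself deploys where it needs it: in the proof of Lemma~\ref{lemma rational syz versus syz}, the implication (2)$\implies$(1) runs through the same chain --- the polarized Fujiki relation gives $L^{n+1}=0$ while $L^n\neq 0$, so $L$ is nef and abundant, Kawamata's semi-ampleness theorem \cite[Theorem~6.1]{Kaw85} makes the fibration regular, and Theorem~\ref{theorem matsushita} identifies it as Lagrangian. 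Your Lorentzian reverse Cauchy--Schwarz argument for $q_X(L,\omega)>0$ is a correct way to see $L^n\cdot\omega^n>0$, using that $L$ lies on the boundary of the positive cone while $\omega$ is interior and $q_X(L)=0\neq q_X(\omega)$.

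One small but genuine caveat: the conjecture is stated for compact K\"ahler primitive symplectic varieties, not only projective ones, and Kawamata's theorem requires projectivity. The paper handles this by substituting Nakayama's result \cite[Theorem~5.5]{Nak87} in the nonprojective K\"ahler case; your write-up, which passes to a morphism onto a \emph{normal projective} base via \cite{Kaw85}, silently restricts to projective $X$ and should be amended accordingly. Your closing diagnosis of why the deformation-theoretic attack fails --- upper semicontinuity of $h^0$ pointing the wrong way, non-invariance of nefness, and the ergodicity dichotomy not descending to the given $X$ --- is accurate and consistent with why the paper assumes rather than proves the conjecture.
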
 

\begin{remark}\label{remark syz smooth case}
In the smooth case, this conjecture is known for deformations of $\mathrm{K3}^{[n]}$ (Bayer--Macr\`i \cite[Theorem 1.5]{BM14mmp}; Markman \cite[Theorems 1.3 and 6.3]{Mar14}), for deformations of $K_n(A)$ (Yoshioka \cite[Proposition 3.38]{Yosh16}), and for deformations of the O'Grady examples ${\rm OG}_6$ (Mongardi--Rapagnetta \cite[Corollary 1.3 and 7.3]{MR21}) and ${\rm OG}_{10}$ (Mongardi--Onorati \cite[Theorem 2.2]{MO22}). 
\end{remark}

We will also need a rational version of the SYZ conjecture. 

\begin{conjecture}[Rational SYZ]\label{conjecture rational syz}
If $L$ is a non-trivial line bundle on a primitive symplectic variety $X$ with $L\in \Movbar(X)$ and $q_X(L)=0$, then $L$ is movable and induces a rational Lagrangian fibration.
\end{conjecture}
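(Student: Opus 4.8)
Since the rational SYZ conjecture is itself wide open, the natural goal is to \emph{deduce it from the} (standard) SYZ Conjecture~\ref{conjecture syz}, thereby reversing the passage carried out in Lemma~\ref{lemma lmp}. The plan is this: starting from a nontrivial movable line bundle $L$ with $q_X(L)=0$, I would pass to a birational model on which the transform of $L$ is nef, apply the SYZ conjecture there to produce a \emph{holomorphic} Lagrangian fibration, and then pull this fibration back to a \emph{rational} Lagrangian fibration on $X$ induced by $L$.

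First I would reduce to the case that $X$ is $\Q$-factorial terminal, exactly as in the opening of the proof of Lemma~\ref{lemma lmp}: one passes to a $\Q$-factorial terminalization \cite[Corollary~1.4.3]{BCHM10} and pulls back $L$, and a rational Lagrangian fibration produced on the terminalization descends to the one sought on $X$. Next, invoking the chamber structure of $\Mov^+(X)$ from \cite[Theorem~1.2]{LMP22a}, I would locate a birational model $\phi\colon X\ratl X'$, with $X'$ again $\Q$-factorial terminal and $\phi$ an isomorphism in codimension one, on which the birational transform $L'$ of $L$ becomes nef. As $\phi$ is an isomorphism in codimension one, it induces an isomorphism on second cohomology preserving the BBF form, so $L'$ is nontrivial with $q_{X'}(L')=0$.

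The SYZ conjecture applied to $(X',L')$ then yields a holomorphic Lagrangian fibration $f'\colon X'\to B$ induced by $L'$, and I would set $f:=f'\circ\phi\colon X\ratl B$. This meromorphic map has connected fibers, its general fiber is the birational transform of a general (hence Lagrangian) fiber of $f'$ and is therefore Lagrangian, and $f$ is induced by $L$; hence $L$ induces a rational Lagrangian fibration. The argument shows, more precisely, that the rational SYZ conjecture holds for $X$ whenever the SYZ conjecture is known for every $\Q$-factorial terminal primitive symplectic variety birational to $X$, so that in particular the smooth deformation types of Remark~\ref{remark syz smooth case} satisfy the rational version as well.

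The step I expect to be the main obstacle is the second one: forcing the \emph{isotropic} class $L$ to be genuinely nef on a single birational model. The difficulty is that $q_X(L)=0$ places $L$ on the boundary of the positive cone, whereas the chamber decomposition of \cite[Theorem~1.2]{LMP22a} primarily controls the interior of $\Mov^+(X)$; along the isotropic boundary infinitely many nef chambers may accumulate, so it is not formal that $L$ lies in the nef cone of some fixed model rather than being an accumulation point of chambers. Resolving this would require combining the rationality of $L$ with the rational polyhedrality of the fundamental domain to place $L$ inside $\Nef(X')$ for an honest birational model $X'$; this boundary analysis is the crux of the argument.
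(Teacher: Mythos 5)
The statement you were given is a \emph{conjecture}, not a theorem: the paper states in Section~\ref{section syz} that the SYZ conjecture ``is wide open in general,'' and both Conjecture~\ref{conjecture syz} and Conjecture~\ref{conjecture rational syz} are used only as \emph{hypotheses} in the main results (Theorem~\ref{theorem main introduction}, Theorem~\ref{theorem main}). There is no proof in the paper to compare against, and your proposal does not prove the statement either: it reduces the rational SYZ conjecture to the equally open Conjecture~\ref{conjecture syz}, so at best it yields the conditional implication ``SYZ $\implies$ rational SYZ,'' which you in fact concede in your penultimate paragraph. That conditional statement is already in the paper: it is the direction (1)$\implies$(2) of Lemma~\ref{lemma rational syz versus syz}, proven there under the hypotheses that $X$ is $\Q$-factorial terminal, that $b_2(X)\geq 5$ or $X$ is smooth, and that SYZ holds for \emph{all locally trivial deformations} of $X$ (this last point matters because the model $X'$ on which you apply SYZ is a different variety from $X$).

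Measured against that lemma, your route has two concrete gaps. First, you apply \cite[Theorem~1.2]{LMP22a} directly to $X$, but that machinery (rational polyhedral fundamental domain on $\Mov^+$, models isomorphic in codimension one) requires $X$ \emph{projective}, and in the paper's usage also $b_2(X)\geq 5$; the conjecture is stated for arbitrary compact K\"ahler primitive symplectic varieties, and you never address non-projective $X$. The paper handles this by deforming $(X,L)$ to a projective pair $(X_t,L_t)$ (dense by \cite[Corollary~6.11]{BL22}) and transporting the fibration back via deformations of Lagrangian fibrations over their Hodge locus and semicontinuity of $h^0(X_t,L_t^{\tensor n})$ --- and this forces a second case you miss entirely: movability of $L$ is not preserved under deformation, so when $L_t\notin\Movbar(X_t)$ the paper must first move $L_t$ into $\Mov^+(X_t)$ by reflections in prime exceptional divisors (\cite[Proposition~5.8, Theorem~5.12]{LMP22a}, together with \cite[Theorem~3.10]{LMP22a}). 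Second, the step you flag as ``the crux'' --- placing the rational isotropic class in the nef cone of an honest model --- is indeed resolvable, and the paper's Lemma~\ref{lemma lmp} resolves it exactly as you guess: since $\Mov^+(X)$ is by definition the convex hull of $\Movbar(X)\cap\Pic(X)_\Q$, the rational class $L$ lies in some translate of the rational polyhedral fundamental domain, hence in a rational polyhedral cone contained in the nef cone of a single birational model, so no accumulation of chambers occurs. But closing that step still only yields the conditional equivalence of Lemma~\ref{lemma rational syz versus syz} (so that, as you note, the known smooth deformation types of Remark~\ref{remark syz smooth case} satisfy the rational version); it does not, and cannot, prove the conjecture itself.
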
 

\begin{lemma}\label{lemma qx positive is big}
Let $X$ be a primitive symplectic variety. If\, $M$ is a $\Q$-line bundle on $X$ with $q_X(M)>0$, then either $M$ or $M^\vee$ is big.
\end{lemma}
\begin{proof}
  By taking a suitable multiple, we may assume that $M$ is a line bundle. Then the proof is contained in that of \cite[Theorem~6.9]{BL22}. Notice that in the notation of \textit{loc.~cit.}, the assertion that $L$ is big is incorrect\footnote{We thank the referee for bringing this inaccuracy to our attention.} and should be replaced by saying  that $L$ \emph{or $L^\vee$} is big. The hypothesis $q_X(L)>0$ only implies that $\chern_1(L)$ \emph{or its negative} lies in the positive cone. On p.~251 in the proof of \cite[Theorem~6.9]{BL22}, it is tacitly assumed that this is the case for~$\chern_1(L)$ itself.
\end{proof}

With the corrections to \cite[Theorem~6.9]{BL22} made above, the claim of Lemma~\ref{lemma qx positive is big} can alternatively be reduced to the statement of \textit{loc.\ cit.}, as was done in \cite[Lemma~4.7]{LMP22a}.

We prove the following variation of \cite[Proposition~5.6]{LMP22a}.

\begin{proposition}\label{proposition lmp}
Let $X$ be a projective $\Q$-factorial primitive symplectic variety and $D\geq 0$ a big $\Q$-divisor on $X$. If\, $D = P(D) + N(D)$ denotes its Boucksom--Zariski decomposition in the sense of\, \cite[Theorem~1.1]{KMPP19}, then $P(D)$ is movable.
\end{proposition}
\begin{proof}
The proof is essentially that of \cite[Proposition~5.6]{LMP22a}: In \textit{loc.\ cit.}, the assumption is that $D\in C_X$, which is only used to ensure that $D$ is big. Hence, the argument remains valid in our situation. Note that the $\mathbb{Q}$-factoriality assumption is missing in \cite[Proposition~5.6]{LMP22a} but is needed when using \cite[Corollary~1.4.2]{BCHM10}. 
\end{proof}

In the context of the rational SYZ conjecture, the following lemma will be useful.

\begin{lemma}\label{lemma movbar and sections imply movable}
Let $X$ be a primitive symplectic variety, and let $L$ be a line bundle on~$X$ with Kodaira dimension $\kappa(L)>0$. Then, $L \in \Movbar(X)$ if and only if\, $L$ is movable. 
\end{lemma}
\begin{proof}
For the non-trivial direction, we distinguish two cases. 

\begin{enumerate}[wide, label={{\it Case}~\rm\arabic*:}, ref=\arabic*]
\item \emph{$L$ is big.}\label{case1}

In this case, $X$ has to be projective by \cite[Theorem~1.6]{Nam02}. By \cite[Corollary~1.4.3]{BCHM10}, we can assume that $X$ is $\Q$-factorial. Since $L$ is a limit of movable line bundles, it is $q_X$-nef in the sense of \cite[Definition~3.2]{KMPP19}. Together with the uniqueness statement in \cite[Theorem~1.1]{KMPP19}, this implies that $L$ is equal to the positive part in its Boucksom--Zariski decomposition. Therefore, since $L$ is big, it is also movable by Proposition~\ref{proposition lmp}. 

\item \emph{$L$ is not big.}\label{case2}

Up to replacing $L$ by a multiple, we may write $\abs{L} = \abs{M} + F$, where $M$ is the movable part, $\dim |M|\geq 1$ (or, equivalently, $M\neq 0$), and $F$ is the fixed part. Since $M\neq 0$, we may assume that $M$ and $F$ are not proportional, for otherwise there is nothing to prove. We will proceed in several steps.

\begin{enumerate}[wide, label={{\it Step}~\rm\arabic*:}, ref=\arabic*]
\item We observe that $q_X(L)=0$. 

Indeed, as $L\in\Movbar(X)$, we have $q_X(L) \geq 0$. If $q_X(L)>0$, then $L$ would be big by Lemma~\ref{lemma qx positive is big}, contradicting the assumption of Case~\ref{case2}. Note that the dual of the non-trivial effective line bundle $L$ cannot be big.

\item\label{step2} We show that $q_X(M)=0$.

As $M$ is movable, we have $q_X(M) \geq 0$. If $q_X(M)>0$, then, again by Lemma~\ref{lemma qx positive is big}, the line bundle $M$ would be big. Since $F$ is effective, we have $\kappa(M)\leq \kappa(L)$. But $L$ is not big, so we have a contradiction.

\item\label{step3} We show that $q_X(M,F)=0$.

Since $M$ is movable and $F$ is effective, $q_X(M,F)\geq 0$. Suppose $q_X(M,F)>0$. Thus, $q_X(L+M)=2q_X(M,L)=2q_X(M,F)>0$, so  $L+M$ is big by Lemma~\ref{lemma qx positive is big} and the fact that $L+M$ is effective. But then,  $L+M+F =2 L$ would also be big, as it is the sum of a big and an effective line bundle. This again contradicts the fact that $L$ is not big.

\item We show that $F=0$, thereby completing the proof. 
\end{enumerate}
By Steps~\ref{step2} and~\ref{step3}, we have $q_X(F)=q_X(M+F)=q_X(L)=0$ and consequently $q_X(F,L)=q_X(F)+q_X(F,M)=0$ so that $H^{1,1}(X,\R)$ would contain the isotropic plane spanned by $L$ and $F$. This is a contradiction to the signature of $q_X$ on $H^{1,1}(X,\R)$ being equal to $(1,h^{1,1}(X)-1)$ by Lemma~\ref{lemma bbf form}.\qedhere
\end{enumerate}
\end{proof}

We note that for line bundles $L \in \Movbar(X)$, bigness is actually equivalent to $q_X(L)>0$ (and hence to having positive top self-intersection, just as for nef line bundles). 

\begin{lemma}\label{lemma movbar and isotropic implies not big}
Let $X$ be a primitive symplectic variety with $\Q$-factorial terminal singularities, and let $L$ be a line bundle on~$X$ with $q_X(L)=0$. If\, $L \in \Movbar(X)$, then $L$ is not big.
\end{lemma}
\begin{proof}
Suppose that $L$ is big. Then, as before, $X$ has to be projective by \cite[Theorem~1.6]{Nam02}. Arguing as in the proof of \cite[Proposition~5.6]{LMP22a} (\textit{cf.} Proposition~\ref{proposition lmp} and its proof, which explain why this is possible as soon as $L$ is big), we obtain a birational map \mbox{$\phi\colon X \ratl X'$} to a normal projective variety $X'$ such that 
\begin{itemize}
	\item the variety $X'$ has $\Q$-factorial terminal singularities; 
	\item the map $\phi$ is an isomorphism in codimension~$1$; in particular, $X'$ is again primitive symplectic; 
	\item the pushforward $\phi_*L$ is nef.
\end{itemize}
In particular, $q_{X'}(\phi_*L)>0$. But $\phi_*$ is a Hodge isometry by \cite[Theorem~4.2]{LMP22a}. This contradicts the hypothesis $q_X(L)=0$.
\end{proof}

We will need the following result on non-algebraic primitive symplectic varieties.

\begin{lemma}\label{lemma algebraic dimension}
Let $X$ be a primitive symplectic variety of dimension $2n$ which is not projective. Then any line bundle $L$ on $X$ has $\kappa(L) \leq n$.
\end{lemma}
\begin{proof}
The proof is essentially the same as that of \cite[Theorem~3.6]{COP10}. Suppose \mbox{$f\colon X \ratl B$} is a dominant meromorphic map to a normal projective variety $B$ associated to the linear system of a line bundle $L$ on $X$. We assume that $\dim B > n$. 

Note that $\dim B < 2n$, as $X$ is not projective. By subtracting the fixed part, we may assume that $L$ is movable, in particular, $q_X(L) \geq 0$. Since $X$ is non-projective, $L$ cannot be big; \textit{cf.} the proof of Lemma~\ref{lemma movbar and sections imply movable}, Case~\ref{case1}. Hence, we must have $q_X(L)=0$ by Lemma~\ref{lemma qx positive is big}.

Let us choose a resolution of indeterminacy by a compact K\"ahler manifold $Y$ and obtain a diagram
\[
\xymatrix@R=2ex@C=3em{
&Y \ar[dl]_\pi \ar[dr]^g&\\
X \ar@{-->}[rr]_f && B.\\
}
\]
Since $L$ is movable, we have that $\pi^*L = g^*A + E$, where $A$ is an ample bundle on $B$ and $E$ is effective and $\pi$-exceptional. Let $\kappa$ be a K\"ahler class on $X$. We will show that for non-negative integers $0\leq a \leq c$ with $n < c \leq 2n$, we have
\begin{equation}\label{eq cop induction}
(g^*A)^a\cdot(\pi^*L)^{c-a}\cdot(\pi^*\kappa)^{2n-c} = 0.
\end{equation}
The proof proceeds by induction on $a$. The base case is equivalent to $L^c\cdot \kappa^{2n-c}=0$. This follows from $L^c=0$, which in turn is a consequence of $q_X(L)=0$ and \cite[Proposition~5.11]{BL22}. Suppose the statement is proven for $a<c$. Then
\[
\begin{aligned}
0&=(g^*A)^a\cdot(\pi^*L)^{c-a}\cdot(\pi^*\kappa)^{2n-c}\\
&= (g^*A)^{a+1}\cdot (\pi^*L)^{c-(a+1)}\cdot (\pi^*\kappa)^{2n-c}  + E\cdot (g^*A)^a\cdot(\pi^*L)^{c-(a+1)}\cdot(\pi^*\kappa)^{2n-c}. \\
\end{aligned}
\]
Since the classes $g^*A$, $\pi^*L$, and $\pi^*\kappa$ are nef and $E$ is effective, both terms have to vanish individually, and \eqref{eq cop induction} follows. We specialize to $a=c=\dim B$. Then, $(g^*A)^a$ is a positive multiple of the class $F$ of a general fiber of $g$, and we obtain $F\cdot(\pi^*\kappa)^{2n-c}=0$. By the projection formula, we find $(\pi_*F)\cdot\kappa^{2n-c}=0$, which is absurd because $\kappa$ is K\"ahler and $\pi_*F$ is a non-trivial effective cycle of codimension $c$.
\end{proof}

Lemma~\ref{lemma algebraic dimension} also shows that the algebraic dimension of $X$ is at most $n$; \textit{cf.} \cite[Theorem~3.6]{COP10}.

It seems worthwhile clarifying the relation between the rational and the holomorphic version of the SYZ conjecture.

\begin{lemma}\label{lemma rational syz versus syz}
Let $X$ be a primitive symplectic variety of dimension $2n$ with \mbox{$\Q$-factorial} terminal singularities. If $b_2(X) \geq 5$ or $X$ is smooth, then the following statements are equivalent: 
\begin{enumerate}
    \item\label{lrsvs-1} All primitive symplectic varieties locally trivially deformation equivalent to $X$ satisfy the SYZ conjecture.
    \item\label{lrsvs-2} All primitive symplectic varieties locally trivially deformation equivalent to $X$ satisfy the rational SYZ conjecture.
\end{enumerate}
\end{lemma}
\begin{proof}
Assume that~\eqref{lrsvs-2} holds, and let $L$ be a non-trivial nef line bundle on $X$ with $q_X(L)=0$. As $X$ is arbitrary, it suffices to prove the existence of a Lagrangian fibration on $X$ itself. By assumption, global sections of $L$ give rise to a rational Lagrangian fibration $f\colon X \ratl B$. By a standard argument, the Fujiki relation (Lemma~\ref{lemma bbf form}\eqref{lbf-5}) implies that $L^{n+1}=0$ in cohomology while $L^n\neq 0$. 
If $X$ is projective, it follows from Kawamata's semi-ampleness theorem \cite[Theorem~6.1]{Kaw85} that $f$ is regular (as $L$ is nef and abundant). For non-projective $X$, we use \cite[Theorem~5.5]{Nak87} instead.

Now suppose  that~\eqref{lrsvs-1} holds, and let $L\in\Movbar(X)$ be a non-trivial line bundle with $q_X(L)=0$. As above, it suffices to show that $L$ is movable and induces a rational Lagrangian fibration. By Lemma~\ref{lemma movbar and sections imply movable}, it suffices to prove that $\kappa(L)=n$. To this end, we will first show that some locally trivial deformation $(X_t,L_t)$ of the pair $(X,L)$ admits a regular Lagrangian fibration. 

We will first deal with the case $b_2(X)\geq 5$. Choose $(X_t,L_t)$ such that $X_t$ is projective. By \cite[Corollary~6.11]{BL22}, such pairs are dense in the local Kuranishi space of $(X,L)$. Let us first assume that $L_t \in \Mov^+(X_t)$. Recall that $\Mov^+(X_t)$ was defined as the convex hull of $\Movbar(X_t)\cap \Pic(X_t)_\Q$, so this condition is equivalent to $L_t \in \Movbar(X_t)$. Now one argues as in the proof of Lemma~\ref{lemma lmp} to see that there is a birational model $X_t'$ of $X_t$ such that the line bundle $L_t'$ on $X_t'$ corresponding to $L_t$ is nef. By our assumption, $X_t'$ satisfies the SYZ conjecture; hence~$L_t$ induces a rational Lagrangian fibration on~$X_t$.

Let us now consider the case where $L_t$ is not in the closure of the movable cone. Then, by \cite[Proposition~5.8 and Theorem~5.12]{LMP22a},  the group generated by reflections in prime exceptional divisors contains an element that maps $L_t$ to some $M_t\in\Mov^+(X_t)$. By \cite[Theorem~3.10]{LMP22a}, the pair $(X_t, M_t)$ is a deformation of $(X_t,L_t)$, so we are back in the previous case. 

Now we use that Lagrangian fibrations locally deform over their Hodge locus by \cite[Theorem~A.2]{EFGMS25} and infer from the semi-continuity of $h^0(X_t,L_t^{\tensor  n})$ that $L$ has Kodaira dimension at least~$n$. If $X$ is non-projective, we conclude by Lemma~\ref{lemma algebraic dimension}. If $X$ is projective, we argue as for~$X_t$ above and deduce that there is a birational model $X'$ of $X$ such that the isotropic line bundle $L'$ on $X'$ corresponding to $L$ is nef. In this case, $\kappa(L)=\kappa(L')$. The Kodaira dimension is always bounded above by the numerical dimension of a nef line bundle, so $\kappa(L')\leq \nu(L')$. Moreover, as shown in the proof of \eqref{lrsvs-2}$\Rightarrow$\eqref{lrsvs-1}, the condition $q_X(L')=q_X(L)=0$ implies $\nu(L')=n$.
In either case, we have $\kappa(L)=n$, and this completes the argument for $b_2(X)\geq 5$.

The proof for smooth $X$ is similar. At a very general point $(X_t,L_t)$ of the Kuranishi space of $(X,L)$, the Picard group of $X_t$ is generated by $L_t$. We deduce that such an $L_t$ is nef by \cite[Proposition~28.2]{GHJ03}. Recall that the nef cone is by definition the closure of the K\"ahler cone so that this statement is non-trivial. By our assumption, $L_t$ induces a Lagrangian fibration on $X_t$, and we argue by semi-continuity as before.
\end{proof}

Note that the proof of \eqref{lrsvs-2}$\Rightarrow$\eqref{lrsvs-1} had no assumptions on $b_2$ or singularities and did not resort to deformations. The smoothness hypothesis can be relaxed to having quotient singularities with $\codim_X X^\sing \geq 4$ by \cite[Corollary~5.6]{Men20}. We believe that the codimension assumption can be dropped if one copies Menet's argument, replacing arbitrary deformations by locally trivial ones.

\section{Hyperbolicity}\label{section hyperbolicity}

Here, we recall some classical hyperbolicity notions that can be found in \cite{Ko76} and \cite{Br78}. 

\begin{definition}\label{definition hyperbolic}
Let $X$ be a complex variety. The {\em Kobayashi pseudometric} on $X$ is the maximal pseudometric $d_X$ such that all holomorphic maps $f \colon (D,\rho) \rightarrow (X,d_X)$ are  distance decreasing, where $(D,\rho)$ is the disk with the Poincar\'e metric. A variety $X$ is {\em Kobayashi hyperbolic} if $d_X$ is a metric.
\end{definition}

One immediately sees that the complex line $\C$ is not Kobayashi hyperbolic. In fact, the Kobayashi pseudometric of $\C$ vanishes identically. Therefore, the existence of an entire curve (that is, a non-constant holomorphic map from the complex line) implies Kobayashi non-hyperbolicity. The converse holds for compact manifolds.

\begin{theorem}[Brody]\label{Brody theorem}\leavevmode
\begin{enumerate}
    \item\label{Bt-1} Let $X$ be a compact complex space. Then $X$ is Kobayashi non-hyperbolic if and only is there exists an entire curve $\mathbb C \rightarrow X$. 
    \item\label{Bt-2} The Kobayashi non-hyperbolicity property is preserved on taking limits.
\end{enumerate}
\end{theorem}
\begin{proof}
The first statement is due to Brody for smooth $X$, see \cite[Theorem~4.1]{Br78}, and the argument essentially goes through in the singular case, see \textit{e.g.}~Lang's book \cite[Theorem~III.2.1]{Lang}. 
The second statement is \cite[Theorem~3.1]{Br78} in the smooth case. In the singular case, all that is needed is the notion of a length function on a complex space as in \cite[Chapter~0]{Lang}. Using compactness, one can argue that a limit of disks with increasing radii gives a Brody curve; see \textit{e.g.}~\cite[Lemma~2.8]{BKV20}.
\end{proof}

A variety admitting no entire curve is sometimes called \emph{Brody hyperbolic}. Brody's theorem thus says that for compact complex varieties, Brody hyperbolicity coincides with Kobayashi hyperbolicity.

\begin{remark}\leavevmode
\begin{enumerate}
\item Note that Kobayashi hyperbolicity always implies Brody hyperbolicity, even without the hypothesis of compactness. Indeed, as soon as there is an entire curve, the Kobayashi distance between points in its image has to be zero. 
\item While Brody's theorem tells us that the limit of non-hyperbolic compact manifolds is non-hyperbolic, the limit of \emph{hyperbolic} compact manifolds can be either hyperbolic or non-hyperbolic. For an example of hyperbolic manifolds specializing to a non-hyperbolic one, we consider a generic family of degree~$d$ hypersurfaces in $\P^n$. For big enough $d$, these are hyperbolic by the main theorem of \cite{Bro17}. They specialize, however, to the Fermat hypersurface given by the polynomial $x_0^d + x_1^d + \cdots + x_n^d$,  which contains a line as soon as $n\geq 3$.
    \item If we allow singular fibers, it is even easier to obtain a (Brody, hence also Kobayashi) non-hyperbolic variety as the limit of hyperbolic ones. Take for example a family of genus $2$ curves that degenerate to a nodal elliptic curve. There, the general fiber is hyperbolic, while the special fiber is not.
\item If we drop the compactness requirement, then Brody's theorem fails; see \cite[Example~3.6.6]{Kob98} for an example of a Kobayashi non-hyperbolic manifold with no entire curves.
\item Non-hyperbolicity is closed (with respect to the Euclidean topology) in families of singular varieties by Theorem~\ref{Brody theorem}\eqref{Bt-2}. 
\end{enumerate}
\end{remark}

\begin{lemma}\label{lemma properties hyperbolicity}
All varieties are assumed to be compact. Let $P$ be one of the properties ``is non-hyperbolic'' or ``satisfies $d_X=0$,'' where $d_X$ is the Kobayashi pseudometric. Then, the following hold: 
\begin{enumerate}
\item\label{lph-1} Holomorphic maps $f\colon X \to Y$ are distance decreasing for the Kobayashi metric. 
\item\label{lph-2} If $f\colon X\to Y$ is finite, then $Y$ has property $P$ if\, $X$ does. For finite \'etale morphisms, the converse holds.
\item\label{lph-3} Let $X, Y$ be compact varieties and $f\colon X\ratl Y$ a dominant meromorphic map. If $d_X = 0$, then $d_Y= 0$ as well.
\item\label{lph-4} If\, $X=X_1\times \cdots \times X_n$ and $X_i$ has property $P$ for all  $i=1,\ldots,n$, then $X$ has property $P$. 
\item\label{item bimeromorphic to smooth} If $f\colon Y\to X$ is a bimeromorphic morphism onto a smooth variety $X$ with $d_X\leq \veps$ for some $\veps \geq 0$, then  $d_Y \leq \veps$ as well.
\end{enumerate}
\end{lemma}
\begin{proof}
The first four items are standard. For the last item, we use that for a Zariski closed subset $V \subset X$ of codimension at least $2$, we have
\begin{equation}\label{eq kobayashi codimension two}
\left(d_X\right)\vert_{X \setminus V} = d_{X \setminus V}; 
\end{equation}
see \cite[Theorem~3.2.19]{Kob98}.
\end{proof}

Note that if in~\eqref{lph-4} the product $X$ has vanishing Kobayashi metric, then so does every single $X_i$. However, the product of a hyperbolic and a non-hyperbolic compact manifold is non-hyperbolic, and therefore the statement of Lemma~\ref{lemma properties hyperbolicity}\eqref{lph-3} is not ``if and only if.''

\begin{example}\label{example ariyan}
Given a surjective morphism $f\colon X \to B$ of complex varieties such that $d_X$ vanishes,  $d_B$ and the Kobayashi pseudometric of the fibers also vanish. The converse, however, is false due to the presence of multiple fibers, as the following example shows. Let $C$ be a genus~$2$ curve and $E$ an elliptic curve. Consider the $\mu_2$-action on $C \times E$ given by the hyperelliptic involution $\iota$ on C and translation by a $2$-torsion point on~$E$. As the action is free, its quotient $X:=C\times E / \mu_2$ is smooth and the quotient morphism $\pi\colon C \times E \to X$ is finite \'etale. In particular, $d_X$ does not identically vanish by Lemma~\ref{lemma properties hyperbolicity}. However, the base and the fibers of the morphism $X \to C/\iota\isom \P^1$ have vanishing Kobayashi distance. Indeed, if $\Sigma \subset C/\iota$ is the ramification locus, the fiber over a point in the complement of $\Sigma$ is an elliptic curve (namely $E$) and the fibers over points of $\Sigma$ are isomorphic to $\P^1$ with multiplicity $2$.
\end{example}

\begin{example}\label{example contraction}
Let $C \subset \P^2\subset \P^3$ be a curve of genus at least~$2$, and let $X \subset \P^3$ be the cone over $C$ with vertex $ v\notin \P^2$. Let $\pi\colon Y \to X$ be the blowup in $v$. Then, $\pi$ is a resolution of singularities, and the exceptional divisor $E$ is a section of a $\P^1$-bundle $f\colon Y\to C$. As $f$ is distance decreasing, we see that $d_Y$ cannot vanish identically. On the other hand, $X$ is rationally chain connected; hence $d_X \equiv 0$. This example shows that the vanishing of the Kobayashi pseudometric is not a birational invariant. Moreover, the quasi-projective variety $Y\ohne E\isom X\ohne \{v\}$ cannot have vanishing Kobayashi pseudometric. This is in stark contrast with the situation for smooth varieties, where the Kobayashi pseudometric is determined by its restriction outside a codimension $2$ subset; see \cite[Theorem~3.2.19]{Kob98}. 
\end{example}

One can still wonder whether the Kobayashi pseudometric is determined by its restriction outside a codimension $2$ subset under some assumptions on the singularities. In concrete terms, consider the following. 

\begin{question}\label{qu1}
Let $X$ be a complex variety with log-terminal (or, more generally, rational) singularities.
\begin{enumerate}
\item\label{qu1-1} Let $V \subset X$ be a Zariski closed subset of codimension at least~$2$. Is it true that \eqref{eq kobayashi codimension two} still holds?
\item\label{qu1-2} Is it true that if $\pi\colon Y \to X$ is a resolution of singularities, then the vanishing of $d_X$ implies the vanishing of $d_Y$?
\end{enumerate}
\end{question}

Note that a positive answer to~\eqref{qu1-1} implies~\eqref{qu1-2}. A positive answer to~\eqref{qu1-2} in full generality would simplify our argument. In our main result, we actually make heavy use of birational modifications; see Section~\ref{section non-hyperbolicity results}. However, we are not affected by the above questions as we will have some stronger geometric input.

\section{Almost holomorphic maps and Campana's theorem}\label{section almost holomorphic}

This section surveys basic notions and results on almost holomorphic maps, the most important of which is undoubtedly Campana's theorem which allows us to produce almost holomorphic maps out of covering families of cycles; see Theorem~\ref{theorem campana}. There are no new results, only Theorem~\ref{theorem glr} is a slight adaption from a result of \cite{GLR13} on irreducible symplectic manifolds to the singular case. We begin by collecting basic results about cycle spaces.

\subsection{Cycle spaces}\label{section cycle spaces}

Let $X$ be a compact complex space. We denote by $\scrB(X)$ Barlet's space of cycles on $X$; see \cite{Bar75}. For a subspace $\gothF \subset \scrB(X)$, we denote by $(F_t)_{t\in\gothF}$ the analytic family of cycles parametrized by $\gothF$. Here, $F_t$ is the cycle corresponding to $t\in\gothF$. If $F$ is a cycle on $X$, we denote its support by $\abs F \subset X$. We will usually drop the word \emph{analytic} and just speak of a family of cycles.

If $(F_t)_{t\in\gothF}$ is a family of cycles, we denote by
\[
\Gamma_\gothF := \{(t,x) \in \gothF \times X \mid x \in \abs{F_t} \} \subset \gothF \times X
\]
its graph, which is an analytic subset in $\gothF\times X$ by \cite[Theorem~VIII.2.7]{SCV7}. We say that $\gothF$ is a \emph{covering} family of cycles if

\[
\bigcup_{t\in \gothF} \abs{F_t} = X.
\]

The actual definition of an analytic family of cycles is a bit involved; see \cite[d\'efinition fondamentale, p.~33]{Bar75}, but we will not need it here. The Barlet space is the universal object classifying analytic families of cycles in the sense that every such family is obtained by pullback along a uniquely determined classifying map from the universal family of cycles. A very useful tool for obtaining families of cycles is the following proposition taken from \cite[Proposition~VIII.2.20]{SCV7}. 

\begin{proposition}\label{proposition family of cycles}
Let $X$ and $S$ be irreducible compact complex spaces. Then, there is a one-to-one correspondence between
\begin{enumerate}
    \item meromorphic maps $S\ratl \scrB(X)$ and
    \item pure-dimensional, $S$-proper cycles $F$ on $S \times X$.
\end{enumerate}
\end{proposition}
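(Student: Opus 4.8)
The plan is to construct the two assignments explicitly and then check that they are mutually inverse; the key input throughout is Barlet's universal property of $\scrB(X)$, recalled above, which identifies holomorphic maps from a reduced space to $\scrB(X)$ with analytic families of cycles on $X$ parametrized by that space. Since both a meromorphic map and an $S$-proper cycle carry their essential data on a dense open subset of the irreducible base $S$, the strategy is to pass to such a dense open, apply the universal property there, and then extend across the complement.

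First I would treat the passage from (1) to (2). Let $\phi\colon S\ratl\scrB(X)$ be meromorphic and let $U\subset S$ be the dense open locus on which $\phi$ is holomorphic. By the universal property, $\phi|_{U}$ is classified by an analytic family $(F_t)_{t\in U}$ of cycles on $X$, and I would pass to its graph $\Gamma_U\subset U\times X$, viewed as an analytic cycle with the induced multiplicities. Setting $F:=\overline{\Gamma_U}$, the closure inside $S\times X$, one must verify three things: that $F$ is an analytic cycle, that it is pure-dimensional, and that it is $S$-proper. Pure-dimensionality is immediate, since $S$ is irreducible so every component of $\Gamma_U$ has dimension $\dim S+d$ with $d$ the constant dimension of the $F_t$, and closures preserve dimension. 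The $S$-properness is automatic here: because $X$ is compact, the projection $S\times X\to S$ is proper, hence so is its restriction to the closed set $\abs F$. The analyticity of the closure $\overline{\Gamma_U}$ is the one genuinely nontrivial point and relies on a Remmert--Stein--type extension theorem for cycles, again exploiting properness over $S$.

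Conversely, for the passage from (2) to (1) I would recover the meromorphic map by slicing. The correspondence only sees the components of $\abs F$ dominating $S$, so I assume these have relative dimension $d$ and let $\pi\colon\abs F\to S$ be the projection. By upper semicontinuity of fiber dimension there is a dense open $U\subset S$ over which $\pi$ is equidimensional with fibers of dimension $d$; over $U$ the fiber cycles $F_s:=F\cdot(\{s\}\times X)$ are well defined and, by Barlet's theory, constitute an analytic family of $d$-cycles on $X$. The universal property then furnishes a holomorphic map $U\to\scrB(X)$, that is, a meromorphic map $S\ratl\scrB(X)$.

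It remains to check that the two constructions are inverse to one another. Two meromorphic maps agreeing on a dense open set coincide, and two $S$-proper cycles all of whose components dominate $S$ agree as soon as their generic fiber cycles agree; since slicing $\overline{\Gamma_U}$ returns the family $(F_t)$ and closing up the graph of the fiber cycles $F_s$ returns $F$, both round trips are the identity on a dense open, hence everywhere. I expect the main obstacle to be precisely the two analyticity assertions---that the fiber cycles $F_s$ vary analytically over the dense open $U$, and that $\Gamma_U$ extends to an honest analytic cycle on all of $S\times X$. Both are the substance of Barlet's foundational results on analytic families of cycles, so in practice the proof reduces to quoting \cite[Ch.~VIII]{SCV7}.
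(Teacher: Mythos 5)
First, a point of comparison: the paper does not prove this proposition at all --- it is quoted verbatim from [SCV7, Ch.~VIII, Proposition~2.20] (hence the bare \qed\ with no proof body), so there is no internal argument to measure your sketch against; your closing remark that in practice the proof reduces to quoting SCV7 is exactly what the paper does. As a reconstruction of the standard argument, your outline has the right structure: classify over the locus of holomorphy, slice in the other direction, check mutual inverses on a dense open. One technical step is misattributed, however. For the direction (1)$\Rightarrow$(2), classical Remmert--Stein does not apply to $\overline{\Gamma_U}$: the set you remove is $(S\ohne U)\times X$, whose dimension $\dim(S\ohne U)+\dim X$ typically exceeds $\dim\Gamma_U=\dim S+d$ (take $d=1$, $\dim X=4$, $S$ a curve), and Remmert--Stein requires the opposite inequality. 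The standard fixes are either Bishop's extension theorem, with the volume bound supplied by compactness, or --- cleaner --- avoiding extension altogether: a meromorphic map $\phi\colon S\ratl\scrB(X)$ has an analytic graph $\Gamma_\phi\subset S\times\scrB(X)$ that is proper and bimeromorphic over $S$; pull back the universal incidence cycle to $\Gamma_\phi$ and take its direct image under the proper projection to $S\times X$. Analyticity then comes from Barlet's direct image of cycles and no extension theorem is needed.

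The second issue is more structural. Your inverse construction ``only sees the components of $\abs F$ dominating $S$'' and silently discards the rest. If cycles with components lying over proper subvarieties of $S$ were admitted in class (2), then (2)$\to$(1) would fail to be injective and the round trip (2)$\to$(1)$\to$(2) would lose those components, so there would be no bijection. Pure-dimensionality of $F$ does not forbid such vertical components: a component of the form $T\times W$ with $\dim T+\dim W=\dim S+d$ and $\dim T<\dim S$ is pure-dimensional and automatically proper over $S$ because $X$ is compact. So the hypothesis ``$S$-proper'' in the sense of [SCV7] must carry more content than the properness you attribute to it --- you even observe yourself that properness is automatic here, which should have been a warning sign --- and your argument needs to invoke that actual content (every component of $\abs F$ surjects onto $S$) for the two constructions to be inverse on the nose. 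Relatedly, in the slicing step you should place the dense open $U$ inside the normal (e.g.\ smooth) locus of $S$: Barlet's theorem that equidimensionality yields an analytic family of fiber cycles, with the correct multiplicities, requires normality of the base, and for a general irreducible compact complex space $S$ this is not automatic.
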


\subsection{Almost holomorphic maps}

\begin{definition}\label{definition almost holomorphic}
For a meromorphic map $f\colon X \ratl B$ and a subset $U\subset B$, we denote by $f^{-1}(U)$ the set of points from the domain of definition of $f$ that map to $U$. The \emph{fiber} of $f$ over $b\in B$ is the closure of $f^{-1}(b)$. A dominant meromorphic map $f\colon X \ratl B$ between compact complex varieties is called \emph{almost holomorphic} if there is a dense open subset $U\subset B$ such that $f\vert_{f^{-1}(U)}\colon f^{-1}(U)\to U$ is holomorphic and  proper.
\end{definition}

Note that being almost holomorphic can also be phrased by saying that the fibers of $f$ are pairwise disjoint over a dense open set in the target.

An important theorem due to Campana allows us to produce many almost holomorphic maps out of (covering) families of cycles. We need to introduce some terminology in order to formulate it. Let $X$ be a compact K\"ahler space,\footnote{Actually, of Fujiki class is sufficient here.} and suppose we are given a family $\{F_t\}_{t\in \gothF}$ of cycles, where $\gothF \subset \scrB(X)$ is a closed subspace of the Barlet space. Then, one can define an equivalence relation on $X$ as follows. 

\begin{definition}\label{definition f-connected}
Two points $x,y\in X$ are \emph{$\gothF$-equivalent} (or simply \emph{equivalent} if the family $\gothF$ is clear from the context) if they can be connected by a chain of cycles in $\gothF$ or if $x=y$. By definition, being \emph{connected by a chain of cycles} in $\gothF=\{F_t\}_{t\in\gothF}$ means that there exist finitely many points $x=x_1, x_2, \ldots, x_{n+1}=y$ and $t_1,\ldots,t_n \in \gothF$ such that $x_i,x_{i+1}\in \abs{F_{t_i}}$ for all $i=1,\ldots,n$. We write $x \sim_\gothF y$ (or simply $x \sim y$) to express that $x$ and $y$ are equivalent.
\end{definition}

It is clear that $\gothF$-equivalence is an equivalence relation. Observe that every $x\in X$ can be connected to itself by a chain of cycles in $\gothF$ if and only if the family is covering. We are now able to state Campana's theorem; see~\cite[Th\'eor\`eme~1]{Cam81}. 

\begin{theorem}[Campana]\label{theorem campana}
Let $X$ be a compact complex space which is globally and locally irreducible. Let $\gothF \subset \scrB(X)$ be a closed subspace, let $(F_t)_{t\in\gothF}$ be the corresponding family of cycles, and assume that for a general point $t\in \gothF$, the cycle $F_t$ is integral. Then there is an almost holomorphic map $f\colon X \ratl B$ such that general fibers of~$f$ are equivalence classes for the relation of\, $\gothF$-equivalence. 
\end{theorem}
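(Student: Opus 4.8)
The plan is to realize the $\gothF$-equivalence relation as a (generically) analytic subset $R\subset X\times X$ whose general fibres over $X$ are the equivalence classes, and then to convert the associated family of classes into a meromorphic map to the cycle space by means of Proposition~\ref{proposition family of cycles}. I begin with the length-one relation. Writing $\Gamma:=\Gamma_\gothF\subset\gothF\times X$ for the graph with projections $p\colon\Gamma\to\gothF$ and $q\colon\Gamma\to X$, the set of pairs lying on a common cycle is the image
\[
R_1:=(q,q)\bigl(\Gamma\times_\gothF\Gamma\bigr)\subset X\times X.
\]
To see that $R_1$ is analytic one restricts to the subfamilies of bounded volume, on which $p$ is proper by Bishop's compactness theorem, so that $q$ and hence $(q,q)$ become proper and Remmert's proper mapping theorem applies; this is the step where the compact Kähler (Fujiki) hypothesis is used. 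Adjoining the diagonal and symmetrizing, $R_1$ is a symmetric, reflexive analytic relation.

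Next I would iterate. The $k$-fold composite $R_{k}=R_1\circ\cdots\circ R_1$, describing pairs ``connected by a chain of length $\leq k$'', is again analytic: it is the image of the analytic subset $(R_{k-1}\times X)\cap(X\times R_1)\subset X\times X\times X$ under the projection to the outer two factors, which is proper because $X$ is compact. This produces an increasing chain $R_1\subset R_2\subset\cdots$ of analytic relations. The dimension of the general fibre of the first projection $\pr_1\colon R_k\to X$ is non-decreasing in $k$ and bounded above by $\dim X$, hence stabilizes: there is an $N$ such that for a general point $x\in X$ the length-$N$ chain component through $x$ already attains the dimension of the full class $[x]=\bigcup_k\{y:(x,y)\in R_k\}$. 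For such general $x$ this component therefore coincides with $[x]$, so that the general equivalence classes are precisely the general fibres of $\pr_1\colon R_N\to X$.

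Finally I would pass from the relation to the map. By upper semicontinuity of fibre dimension over the (globally) irreducible $X$, there is a dense Zariski-open $U\subset X$ on which all fibres $[x]$ of $\pr_1\colon R_N\to X$ have one and the same dimension $d$; endowing each with its natural cycle structure, which is integral for general $x$ thanks to the local irreducibility of $X$ and the integrality of the general $F_t$, turns the closure of $R_N\cap(U\times X)$ into a pure-dimensional, $X$-proper cycle on $X\times X$. Proposition~\ref{proposition family of cycles} converts this cycle into a meromorphic map $\chi\colon X\ratl\scrB(X)$, $x\mapsto[\ol{[x]}]$, defined on $U$; I set $B$ to be the closure of its image and $f:=\chi$. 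Two general points have the same image under $f$ exactly when they are $\gothF$-equivalent, so the general fibres of $f$ are the equivalence classes; as these are pairwise disjoint over the dense open $\chi(U)$, where $f$ is holomorphic and proper, $f$ is almost holomorphic.

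I expect the main obstacle to be the interface between the set-theoretic relation and honest analytic geometry: the chain relations $R_k$ form an increasing family whose naive union need not be globally analytic, so the argument must be run at the level of generic fibre dimension, and the analyticity of the very first relation $R_1$ already requires controlling properness over the (in general non-compact) cycle space by passing to bounded-volume subfamilies. It is precisely the compact Kähler hypothesis, through the Bishop--Barlet theory of cycle spaces, that simultaneously makes Remmert's proper mapping theorem applicable and allows the equivalence classes to be parametrized by a meromorphic map into $\scrB(X)$.
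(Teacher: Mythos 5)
The paper does not prove this theorem; it quotes it from Campana, citing \cite[Th\'eor\`eme~1]{Cam81} (with \cite[Theorem~1.1]{Cam04} to remove the irreducibility assumption on $\gothF$, and \cite[Theorem~2.6]{Kol87} for the algebraic analogue). So the comparison is between your reconstruction and the literature proofs. Your overall architecture --- the chain relations $R_k\subset X\times X$, analyticity via properness of cycle-space graphs plus Remmert, and the conversion of the generic classes into a meromorphic map to $\scrB(X)$ via Proposition~\ref{proposition family of cycles} --- is indeed the correct skeleton of the Campana/Koll\'ar argument, and your construction of the $R_k$ and your final passage to an almost holomorphic map are sound modulo the step discussed below.

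The genuine gap is the stabilization step. From the fact that $d_k:=\dim$ of the general fibre of $\pr_1\colon R_k\to X$ is non-decreasing and bounded you correctly get $d_k=d$ for all $k\geq N$, but the inference ``the length-$N$ component attains the dimension of the full class $[x]$, therefore coincides with it'' is a non sequitur: $[x]=\bigcup_k R_k(x)$ is an increasing \emph{countable} union of analytic sets all of dimension $d$, and equality of dimensions does not prevent new $d$-dimensional irreducible components from appearing at every stage; an increasing sequence of analytic subsets of a compact space need not stabilize. This is not a pedantic worry --- it is exactly where the integrality hypothesis earns its keep. Without it the conclusion is simply false: on an elliptic curve $E$, the analytic family of degree-two zero-cycles $F_e=\{e\}+\{e+a\}$ with $a$ non-torsion has every $R_k(x)$ finite of dimension $0$ (so $d_k$ stabilizes immediately), yet the equivalence classes $x+\Z a$ are countably infinite and dense, and no meromorphic map has them as general fibres. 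Ruling out this infinite escalation when the general member is integral is the hard core of the theorem; it needs more than a fibre-dimension count --- Campana uses compactness of the components of $\scrB(X)$ (Bishop/Lieberman, which is where the K\"ahler/Fujiki hypothesis really enters) together with a saturation argument in the cycle space, while Koll\'ar's algebraic proof \cite{Kol87} (see also \cite[Ch.~5]{Deb01}) requires the countability of Chow components and a careful choice of a very general locus. A secondary issue of the same flavour: your analyticity of $R_1$ via bounded-volume subfamilies only shows that $R_1$ is a countable increasing union of analytic sets when the closed subspace $\gothF$ has infinitely many components of unbounded volume; this reducible case is precisely what \cite{Cam81} excludes and \cite{Cam04} treats separately, so you should either assume $\gothF$ irreducible (hence compact) or explain the reduction. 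As written, your proposal establishes the routine part of the proof and asserts its hard part.
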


Note that the statement of the theorem is trivial in case $\gothF$ is not a covering family of cycles. In Campana's original result, the subspace $\gothF$ was assumed to be irreducible, but this assumption can be removed; see \cite[Theorem~1.1]{Cam04}. An algebraic version of Campana's theorem has been obtained by Koll\'ar \cite[Theorem~2.6]{Kol87}; see also \cite[Chapter~5]{Deb01}.

\begin{remark}
The space $B$ from Theorem~\ref{theorem campana} is constructed in \cite{Cam81} as a subspace of the Barlet space. Therefore, it can be chosen K\"ahler (respectively, of Fujiki class) if $X$ is K\"ahler (respectively, of Fujiki class); see \cite[Th\'eor\`eme~2]{Var86} or \cite[Theorem~4']{Var89} in the K\"ahler case and \cite[Corollaire~3]{Cam80} for spaces of Fujiki class.
\end{remark}

\subsection{Almost holomorphic Lagrangian fibrations}

The following theorem is a slight adaption of \cite[Lemma~6.6]{GLR13} for primitive symplectic varieties. Some special attention has to be paid to $\Q$-factoriality and to ``horizontal'' singularities. We include a sketch of the argument for convenience.

\begin{theorem}\label{theorem glr}
Let $X$ be a projective primitive symplectic variety, $B$ a projective variety, and let $f\colon X \ratl B$ be a dominant almost holomorphic map with $0 < \dim B < \dim X=2n$. Then there is a diagram
\[
\xymatrix{
X \ar@{-->}[r]^\phi\ar@{-->}[d]_f & X' \ar[d]^{f'}\\
B \ar@{-->}[r]^\psi & B'\rlap{,}
}
\]
where $X'$ is a primitive symplectic variety, $B'$ is a normal projective variety, $f'\colon X'\to B'$ is a $($holomorphic\,$)$ Lagrangian fibration, $\phi$ and $\psi$ are birational, and $\phi$ is holomorphic in a neighborhood of the general fiber of $f$. In particular, $\dim B = n$ and $f$ is a rational Lagrangian fibration.
\end{theorem}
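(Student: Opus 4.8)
The plan is to reduce the statement to a situation where the machinery of Lemma~\ref{lemma lmp} applies, namely to a \emph{holomorphic} almost holomorphic map induced by an isotropic movable line bundle. The starting observation is that the general fiber $F$ of $f$ is a compact complex subvariety of dimension $2n - \dim B$. By the almost holomorphicity hypothesis, these general fibers are pairwise disjoint and sweep out $X$, so they form a covering family of cycles. I would first show that the general fiber is isotropic for the Beauville--Bogomolov--Fujiki form in the following sense: the class $\ell := f^*A$ of the pullback of an ample divisor $A$ on $B$ (defined via the reflexive pullback \eqref{eq rational pullback} on a resolution of indeterminacy $\tilde X \to X$, $\tilde X \to B$) is a nonzero movable class on $X$, and I expect to argue $q_X(\ell) = 0$. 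The point is that $\ell^{\dim B + 1}$ must vanish: since the fibers are pairwise disjoint over a dense open subset of $B$ and $\dim B < 2n$, the line bundle $\ell$ cannot be positive in the symplectic directions transverse to the fibers. Combined with the Fujiki relation Lemma~\ref{lemma bbf form}(5), which forces $q_X(\ell)$ to be a root of a one-variable polynomial determined by these intersection numbers, I would conclude $q_X(\ell) = 0$ and hence, by the signature statement Lemma~\ref{lemma bbf form}(2)--(3) together with isotropy, that $\dim B = n$.

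Having established $q_X(\ell) = 0$ with $\ell$ movable and of Kodaira--Iitaka dimension $n$, I would then invoke Lemma~\ref{lemma lmp}: since $X$ is projective with $b_2(X) \geq 5$ (this last hypothesis I would need to import, or else pass first to a $\Q$-factorial terminalization as in the proof of that lemma), there is a birational map $\phi \colon X \ratl X'$ to a primitive symplectic variety $X'$ with $\Q$-factorial terminal singularities and a holomorphic Lagrangian fibration $f' \colon X' \to B'$ such that the birational transform of $\ell$ is the pullback of an ample line bundle on $B'$. The induced rational map $\psi \colon B \ratl B'$ between the two bases is then birational because both $f$ and $f'$ are given by (multiples of) the same linear system, up to the birational identification $\phi$; concretely, $\psi$ is the map on bases compatible with $\phi$ and the two fibration structures, and one checks it is birational by comparing function fields, using that $f$ and $f'$ both have connected fibers.

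The remaining and most delicate assertion is that $\phi$ is \emph{holomorphic in a neighborhood of the general fiber of $f$}. Here the original source \cite[Lemma~6.6]{GLR13} is the model, and the adaptation to the singular, primitive symplectic setting is where the real work lies. The idea is that the indeterminacy locus of $\phi$ is contained in the exceptional divisors of the birational modifications performed in Lemma~\ref{lemma lmp}, which are isomorphisms in codimension one; since $X$ and $X'$ have $\Q$-factorial terminal singularities they are isomorphic away from a closed subset of codimension $\geq 2$. A general fiber $F$ of $f$ meets this indeterminacy locus properly, and one must upgrade this to the statement that $F$ (together with a neighborhood) avoids it entirely for \emph{general} $F$, so that $\phi$ restricts to an isomorphism onto its image near $F$. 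This is precisely the point where the ``horizontal'' singularities and the $\Q$-factoriality need careful attention, as flagged before the statement: one must rule out that the indeterminacy of $\phi$ dominates $B$, which would prevent the general fiber from lying in the locus of holomorphicity. I expect this verification — that the birational transform respects the fibration near the general fiber and does not contract or modify it — to be the main obstacle, and I would handle it by tracking the pullback of $\ell$ through each step of the MMP-type argument underlying Lemma~\ref{lemma lmp} and using the equidimensionality of Lagrangian fibrations from Theorem~\ref{theorem matsushita}(2).
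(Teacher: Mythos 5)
Your proposal has a genuine circularity at its core: Lemma~\ref{lemma lmp} takes as \emph{hypothesis} a movable line bundle ``inducing a rational Lagrangian fibration'', i.e.\ it presupposes that the general fiber of the associated map is Lagrangian --- but that is precisely (part of) the conclusion of Theorem~\ref{theorem glr}, which is the paper's tool (via Theorem~\ref{theorem almost holomorphic matsushita}) for showing that an almost holomorphic map of intermediate fiber dimension \emph{is} Lagrangian in the first place. Nothing in your first step delivers this: isotropy $q_X(\ell)=0$ does not imply $\dim B=n$ (``signature together with isotropy'' is a non sequitur; one would need something like the polarized Fujiki relations and $\ell^n\neq 0$), and even granting $\dim B=n$ you still have no vanishing of $\sigma$ on the general fiber. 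Your intersection-theoretic claim $\ell^{\dim B+1}=0$ is also unjustified as stated: the reflexive pullback \eqref{eq rational pullback} does not commute with self-intersection, and the paper's own example of the contracted elliptic K3 surface $\bar S$, where $f^*\sO(1)$ is \emph{ample}, shows the claim fails badly once almost holomorphicity is dropped --- so any proof must use almost holomorphicity essentially, which you do not. Finally, importing $b_2(X)\geq 5$ (needed for Lemma~\ref{lemma lmp} via the cone theorem of \cite{LMP22a}) changes the statement: Theorem~\ref{theorem glr} carries no such hypothesis, and the paper's proof needs none.

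The paper's route is different and avoids all of this: after a $\Q$-factorialization it forms the klt pair $(X,\delta f^*A)$, resolves the base locus of the linear series, applies the canonical bundle formula (using canonical singularities to see the discrepancy divisor $E$ does not dominate $B$), observes that the general fiber over the holomorphicity locus $U$ has trivial canonical bundle so that a good minimal model exists over $U$, and then invokes Hacon--Xu \cite[Theorem~1.1]{HX13} to produce a good minimal model $(X',\Delta')$ over $B$, with $\Delta'$ semi-ample inducing the holomorphic fibration $f'$; the Lagrangian property and $\dim B=n$ then come \emph{out} of Matsushita's Theorem~\ref{theorem matsushita} applied to $f'$, rather than going in. For the step you correctly flag as most delicate --- holomorphicity of $\phi$ near the general fiber --- your sketch (``track the pullback through the MMP'') offers no mechanism: being an isomorphism in codimension one does not prevent the flipped/indeterminacy loci from dominating $B$ and meeting every fiber. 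The paper's mechanism is concrete: over the open set $V\subset B^\reg$ where $f'$ is smooth with abelian fibers, resolve the indeterminacy of $\phi$; fibers of the resolution over $X$ are rationally chain connected by \cite[Corollary~1.5]{HM07} since $X$ has canonical singularities, and abelian varieties contain no rational curves, so these fibers are contracted on the $X'$-side as well, making $\phi$ holomorphic (and then, by triviality of the canonical divisors and smoothness of $X'\times_B V$, an isomorphism) over $V$. Without this argument, or a replacement for it, your proposal does not close.
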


\begin{proof}
Replacing $X$ by a $\Q$-factorialization, see \cite[Corollary~1.4.3]{BCHM10}, we may assume that $X$ itself has only $\Q$-factorial singularities. In this case, we may define $D:=f^*A$ for some very ample Cartier divisor~$A$ on $B$. We choose a rational number $\delta>0$ small enough such that the pair $(X, \Delta)$ with $\Delta:=\delta D$ is klt. Note that this is always possible, as a primitive symplectic variety has canonical singularities. We choose a dense open $U\subset B$ such that $f\vert_{f^{-1}(U)}\colon f^{-1}(U) \to U$ is holomorphic and proper. Then we resolve the indeterminacy of the linear series $\abs{d\Delta}$, where $d$ is such that $d\Delta$ is Cartier, see \cite[Definition~9.1.11]{Laz04}, by a proper modification $p\colon \wt X \to X$. This implies that 
\[
p^*\abs{d\Delta} = \abs M + G, 
\]
where $M$ is free and $G$ is a fixed component. We can choose $d$ big enough so that the above equality holds for all multiples of $d\Delta$, \textit{i.e.}~$G$ becomes the stable base locus of $p^*\abs{d\Delta}$. This is parallel to \cite[Section~6.3.1]{GLR13}, but note that unlike there, $p$ is not necessarily an isomorphism over $f^{-1}(U)$ due to the singularities of $X$. Nevertheless, the proof of \cite[Lemma~6.6]{GLR13} goes through with minor changes. We will comment on where we deviate from that proof. 

First, we define $B'$ as the target of the map $\wt f\colon \wt X \to B'$ given by the linear system of $mM$ for big enough~$m$. Hence, $B'$ is a normal projective variety, and we obtain a commutative diagram
\[
\xymatrix{
X \ar@{-->}[d]_f & \wt X \ar[l]_p \ar[d]^{\wt f}\\
B \ar@{-->}[r]^\psi & B'\rlap{,}
}
\]
where $\psi$ birational and an isomorphism on $U\subset B$ by construction. As $\psi\circ f$ is also almost holomorphic, we may assume $B=B'$. Next, one considers the canonical bundle formula for the pair:
\[
K_{\wt X} + p^{-1}_*\Delta = p^*(K_X + \Delta) + F - E, 
\]
where $F, E$ are effective divisors supported on the exceptional locus of $p$. Moreover, $\floor*{E}=0$ as $(X,\Delta)$ is klt. As $X$ has canonical singularities, $E$ does not dominate $B$. We set $\wt\Delta:=\Delta + E$, and, after possibly shrinking $\delta$ further, we may assume that the pair $(\wt X, \wt \Delta)$ is klt. 

By adjunction (and up to shrinking $U$ if necessary), a fiber of $f$ over a point in $U$ has trivial canonical bundle and thus canonical singularities. Since $E$ does not meet the general fiber, the restriction of $K_{\wt X}+ \wt\Delta$ to the general fiber of $f\circ p$ is semi-ample, so in particular the pair $(\wt X, \wt \Delta) \times_B U$ has a good minimal model over~$U$ (namely $(f^{-1}(U),\Delta\vert_{f^{-1}(U)})$). By \cite[Theorem~1.1]{HX13}, the pair $(\wt X,\wt \Delta)$ has a good minimal model $(X',\Delta')$ over $B$. Note that unlike in \cite[Section~6.3.2]{GLR13}, the  pair $(\wt X, \wt \Delta) \times_B U$ is not necessarily itself a good minimal model, but the result of Hacon--Xu still applies. Let $\chi\colon \wt X \ratl X'$ denote the corresponding birational map for which $\Delta'=\chi_* \wt\Delta$. As in \cite[Theorem~4.4]{Lai11} and \cite[Section~6.3.3]{GLR13}, one shows that $(X',\Delta')$ is actually a minimal model of $(\wt X,\wt \Delta)$ (\textit{i.e.}, not only over $B$), that $X'$ has trivial canonical bundle (see \cite[Claim~6.9]{GLR13}) and hence is primitive symplectic, and that $\Delta'$ is semi-ample and induces the morphism $f'\colon X'\to B$. 

It remains to show that the birational map $\phi:=\chi\circ p^{-1}\colon X\ratl X'$ is an isomorphism in a neighborhood of the general fiber of $f$. Let us point out that this was automatic in \cite{GLR13}. By Theorem~\ref{theorem matsushita}, $f'$ is a smooth morphism with abelian fibers over a dense open set $V\subset B^\reg$ in the regular locus of the base. Let us consider a resolution of indeterminacy of $\phi\colon X'\times_BV \ratl X\times_B V$ over $V$ as in the diagram below. This can for example be obtained as a resolution of the closure of the graph of $\phi$. 
\[
\xymatrix{
&W \ar[dr]^{q'}\ar[dl]_q&\\
X\times_B V \ar@{-->}[rr]^{\phi} \ar[dr]_f && X'\times_BV \ar[dl]^{f'}\\
&V&\\
}
\]
Then every curve that is contracted by $q$ is mapped to a fiber of $f'$ under $q'$. But $X$ has canonical singularities; hence the fibers of $q$ are rationally chain connected by \cite[Corollary~1.5]{HM07}. Thus, they have to be contracted by $q'$ as well, and this makes $\phi$ holomorphic on $X\times_BV$. Since $X$ and $X'$ have trivial canonical divisor and $X'\times_BV$ is smooth, the morphism $\phi\colon X\times_BV\to X'\times_BV$ is an isomorphism, as claimed.
\end{proof}

Combining Theorems~\ref{theorem campana} and~\ref{theorem glr}, we immediately obtain an almost holomorphic version of Matsushita's theorem.

\begin{theorem}\label{theorem almost holomorphic matsushita}
Let $X$ be a projective primitive symplectic variety and $\gothF \subset \scrB(X)$ be a closed subspace whose general point corresponds to an integral cycle. Let \mbox{$f\colon X \ratl B$} be an almost holomorphic map whose general fibers are $\gothF$-equivalence classes. If\, $\dim B \notin\{0,\dim X\}$, then $f$ is a rational Lagrangian fibration. \qed
\end{theorem}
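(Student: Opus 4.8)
The plan is to read this statement as an immediate corollary of Theorem~\ref{theorem glr}, so the entire task reduces to checking that the hypotheses of that theorem are satisfied by the given data. First I would record the dimension bookkeeping. By assumption $X$ is a projective primitive symplectic variety, and $f$ is almost holomorphic, hence dominant by Definition~\ref{definition almost holomorphic}; dominance gives $\dim B \le \dim X = 2n$. Combined with the standing hypothesis $\dim B \notin \{0,2n\}$, this forces $0 < \dim B < 2n$, which is precisely the dimension constraint appearing in Theorem~\ref{theorem glr}. So the only hypothesis of that theorem that is not handed to us verbatim is the projectivity of the target $B$.

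To secure projectivity of $B$, I would invoke the algebraic version of Campana's theorem recorded after Theorem~\ref{theorem campana}, namely Koll\'ar's result \cite[Theorem~2.6]{Kol87}. Since $X$ is projective and the general member of $\gothF$ is an integral (hence algebraic) cycle, the quotient of $X$ by the relation of $\gothF$-equivalence is realized by an almost holomorphic map onto a \emph{projective} variety. Because the general fibers of $f$ are by hypothesis exactly the $\gothF$-equivalence classes, the map $f$ agrees, after a birational modification of the target, with Koll\'ar's map; therefore $B$ may be taken projective. (Here one uses only that two dominant meromorphic maps with the same general fibers have birationally identified targets, so replacing $B$ by Koll\'ar's base changes nothing about the conclusion we are after.)

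With $X$ projective primitive symplectic, $B$ projective, and $f$ dominant almost holomorphic satisfying $0 < \dim B < \dim X = 2n$, Theorem~\ref{theorem glr} applies directly: it produces a birational model $f' : X' \to B'$ that is a holomorphic Lagrangian fibration, shows $\dim B = n$, and concludes that $f$ itself is a rational Lagrangian fibration in the sense of Definition~\ref{definition lagrangian fibration} (the required connectedness of the general fiber being subsumed in that conclusion). I do not expect a genuine obstacle in this argument, as it is a true corollary; the only point demanding a little care is the matching of the \emph{given} map $f$ with the a priori Campana--Koll\'ar map built from $\gothF$, which is harmless precisely because the two share their general fibers and hence the same resolved graph over a dense open subset of the base.
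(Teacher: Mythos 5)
Your proposal is correct and takes essentially the same route as the paper, whose entire proof is the one-line remark that the theorem follows by combining Theorem~\ref{theorem campana} (in its algebraic, Koll\'ar form, which supplies a projective base) with Theorem~\ref{theorem glr}. Your dimension bookkeeping and the birational identification of the given $f$ with the Campana--Koll\'ar quotient map are exactly the routine verifications the paper leaves implicit behind the \qed{} in the statement.
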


\section{Non-hyperbolicity of holomorphic symplectic varieties}\label{section non-hyperbolicity results}

The purpose of this section is to prove our main result, Theorem~\ref{theorem main introduction}, which is concerned with the non-hyperbolicity of holomorphic symplectic varieties and vanishing of the Kobayashi pseudometric. We will indeed prove a slightly stronger version, see Theorem~\ref{theorem main}, and in order to formulate it, we recall the notion of the rational rank of a period. Let $\Lambda$ be a lattice of signature $(3,n)$, and consider the period domain
\begin{equation}
    \Omega_\Lambda:=\{[x]\in \P(\Lambda\tensor \C) \mid (x,x)=0, (x,\bar x)>0 \}.
\end{equation}
It parametrizes Hodge structures of weight $2$ of primitive symplectic varieties with $(H^2(X,\Z),q_X) \isom \Lambda$. For $p\in \Omega_\Lambda$, we will denote the corresponding Hodge decomposition by
\[
\Lambda\tensor \C = H^{2,0}_p \oplus H^{1,1}_p \oplus H^{0,2}_p.
\]

\begin{definition}\label{definition rational rank}
The \emph{rational rank} of a period $p\in\Omega_\Lambda$ is defined as 
$$
\rrk(p):=\dim_\Q \left(\left(H_p^{2,0}\oplus H_p^{0,2}\right) \cap \Lambda\tensor\Q\right) \in \{0,1,2\}.
$$
We define the rational rank of a primitive symplectic variety $X$, denoted by $\rrk(X)$, to be the rational rank of its period $\mu_\C(H^{2,0}(X))$ after having chosen some \emph{marking}, that is, an isometry $\mu\colon H^2(X,\Z) \to \Lambda$. Note that the rational rank of $X$ does not depend on the choice of marking.
\end{definition}

In \cite[Theorem 4.8]{Ver15} and \cite[Theorem 2.5]{Ver17}, Verbitsky classified the possible orbits under the action of any arithmetic group; see also \cite[Proposition~3.11]{BL21}.

\begin{theorem}[Verbitsky]\label{theorem verbitsky}
Assume $\rk(\Lambda)\geq 5$. For $p\in\Omega_\Lambda$, there are three types of orbits of $p$ under the action of $\Gamma := \O(\Lambda)$, depending on the rational rank:
\begin{enumerate}
\item If\, $\rrk(p)=0$, then the orbit is dense, i.e., $\overline{\Gamma\cdot p}=\Omega_\Lambda$. 
\item If\, $\rrk(p)=1$, then $\overline{\Gamma\cdot p}$ is a $($countable$)$ union of totally real submanifolds of\, $\Omega_\Lambda$ of real dimension equal to $\dim_\C\Omega_\Lambda$. 
\item If\, $\rrk(p)=2$, then the orbit is closed, i.e.,  $\overline{\Gamma\cdot p}$ is countable.
\end{enumerate} 
\end{theorem}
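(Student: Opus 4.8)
The plan is to realize $\Omega_\Lambda$ as a real homogeneous space and to read off the trichotomy from Ratner's orbit closure theorem. Set $V:=\Lambda\otimes\R$, a quadratic space of signature $(3,n)$ with $n:=\rk(\Lambda)-3\ge 2$. Sending a period $p$ to the real plane $P_p:=(H^{2,0}_p\oplus H^{0,2}_p)\cap V$ identifies $\Omega_\Lambda$ with the Grassmannian of oriented positive-definite $2$-planes in $V$: the plane is positive definite by Lemma~\ref{lemma bbf form}(3), and $p$ is recovered from $P_p$ with its orientation because a rotation inside $P_p$ only rescales the isotropic line $H^{2,0}_p$. Under this identification $G:=\SO^+(V)\cong\SO^+(3,n)$ acts transitively, the stabilizer of a base plane $P_0$ has identity component $H\cong\SO(2)\times\SO^+(1,n)$ (the factors acting on $P_0$ and on $P_0^\perp$, of signature $(1,n)$), and the rational rank becomes $\rrk(p)=\dim_\Q(P_p\cap\Lambda_\Q)$, the dimension of the rational part of the positive plane. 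Write $H_0\cong\SO^+(1,n)$ for the noncompact factor of $H$.

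By Borel--Harish-Chandra, $\Gamma=\O(\Lambda)$ is an arithmetic lattice in $\O(V)$; I would pass to the finite-index sublattice $\Gamma^+:=\Gamma\cap G$ (this only changes orbit closures by finitely many translates, which preserves all three conclusions). Choosing $g\in G$ with $gP_0=P_p$, the $\Gamma^+$-orbit closure of $p$ is the projection to $\Omega_\Lambda=G/H$ of the right $H_0$-orbit closure of $x:=\Gamma^+ g$ in the finite-volume space $\Gamma^+\backslash G$; here the compact factor $\SO(2)\subset H$ is invisible, as it fixes the period. Because $n\ge 2$, the group $H_0\cong\SO^+(1,n)$ is connected semisimple with no compact factor, hence generated by its unipotent one-parameter subgroups (for $n=1$ the factor $\SO^+(1,1)$ is a split torus and the argument collapses — this is exactly where $\rk(\Lambda)\ge 5$ enters). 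Ratner's orbit closure theorem then yields a closed subgroup $L$ with $H_0\subseteq L\subseteq G$ such that $\overline{xH_0}=xL$ and $xL$ carries a finite $L$-invariant measure; by the rationality of finite-volume orbits of unipotently generated groups (Ratner, Dani--Margulis, Mozes--Shah), $gLg^{-1}$ is the real locus of a $\Q$-algebraic subgroup of $\SO(\Lambda)$, so its invariant subspaces of $V$ are rational.

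The decisive step is to classify the intermediate groups $H_0\subseteq L\subseteq G$ and to match $L$ with $\rrk(p)$. Decomposing $\mathfrak g=\mathfrak{so}(V)\cong\wedge^2 V$ as a module over $\mathfrak h_0=\mathfrak{so}(P_0^\perp)$ along $V=P_0\oplus P_0^\perp$ gives $\mathfrak g=\mathfrak h_0\oplus\mathfrak{so}(P_0)\oplus(P_0\otimes P_0^\perp)$, where $P_0^\perp$ is $H_0$-irreducible and occurs twice in the middle summand. A short bracket computation shows that the only subalgebras containing $\mathfrak h_0$ are $\mathfrak h_0$, $\mathfrak h_0\oplus\mathfrak{so}(P_0)=\mathfrak h$, the algebras $\mathfrak{so}(\ell\oplus P_0^\perp)\cong\mathfrak{so}(2,n)$ attached to a line $\ell\subset P_0$, and $\mathfrak g$ itself. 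Thus, up to its central circle, $L$ is one of $\SO^+(1,n)$, $\SO^+(2,n)$ (fixing a line $\ell'\subset P_0$ pointwise), or $G$, whose pointwise-fixed subspace $W\subseteq P_0$ has dimension $2$, $1$, $0$. The rational space $gW\subseteq P_p$ then gives $\rrk(p)\ge\dim W$. Conversely, if $R:=P_p\cap\Lambda_\Q$ has dimension $r$, then $gH_0g^{-1}=\SO^+(P_p^\perp)$ fixes $R$ pointwise, so the closed finite-volume orbit $\Gamma^+\SO^+(R^\perp)\,g$ contains $\overline{xH_0}$, forcing $L\subseteq g^{-1}\SO^+(R^\perp)g$ and hence $\dim W\ge r$; therefore $\dim W=\rrk(p)$ and $L$ is determined by $\rrk(p)$.

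It remains to translate each case back to $\Omega_\Lambda$ by projecting $xL$ to $G/H$. If $\rrk(p)=0$ then $L=G$, so $\overline{\Gamma^+\!\cdot p}=\Omega_\Lambda$. If $\rrk(p)=2$ then $L=H_0$ fixes $P_0$ and the projection of $xL$ is the single plane $P_p$, so $\Gamma^+\!\cdot p$ equals its closure; as $P_p$ is rational, the orbit is a discrete, countable set of rational planes. If $\rrk(p)=1$, the projection of the closed orbit $x\,\SO^+(2,n)$ is the set of positive planes containing the fixed rational line $m=g\ell'$; parametrizing these by positive lines in $m^\perp\cong\R^{2,n}$ shows it is a submanifold of real dimension $(2+n)-1=n+1=\dim_\C\Omega_\Lambda$, and the real-linear condition $m\subset P_{p'}$ cuts out a maximal totally real submanifold, while the countably many $\Gamma^+$-translates of $m$ give the asserted countable union. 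Taking the finitely many $\Gamma/\Gamma^+$-translates preserves each conclusion. I expect the main obstacle to be the input from homogeneous dynamics: invoking Ratner's orbit closure theorem together with the rationality of finite-volume orbits, and pinning down the intermediate subgroup $L$ via the two containments above. Once $L$ is identified, the dimension count, the total-reality in the rank-one case, and the closedness in the rank-two case are comparatively routine.
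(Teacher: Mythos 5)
The paper offers no proof of Theorem~\ref{theorem verbitsky} at all --- it is quoted from Verbitsky \cite[Theorem 4.8]{Ver15}, \cite[Theorem 2.5]{Ver17} (see also \cite[Proposition~3.11]{BL21}) --- and your argument reconstructs precisely the strategy of those cited proofs, correctly: realize $\Omega_\Lambda$ as $\SO^+(3,n)/(\SO(2)\times\SO^+(1,n))$, apply Ratner's orbit closure theorem to $H_0=\SO^+(1,n)$ (which is generated by unipotents exactly when $n\geq 2$, i.e.\ $\rk(\Lambda)\geq 5$), classify the intermediate subalgebras $\mathfrak{h}_0\subseteq\mathfrak{l}\subseteq\mathfrak{g}$ (your list is complete, since $\mathfrak{l}$ must be an $\mathfrak{h}_0$-submodule and $P_0^\perp$ is absolutely irreducible), and pin down $L$ via the two containments $\dim W\leq\rrk(p)\leq\dim W$. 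The single point deserving one extra line is the rationality step when $L$ contains the compact circle factor, where Borel density does not apply directly: instead take the Zariski closure $M$ of $\Gamma^+\cap gLg^{-1}$, which is defined over $\Q$, contains $gH_0g^{-1}$ (the projection of a lattice in $gLg^{-1}$ along the compact factor is a lattice in the noncompact part), and hence has rational fixed space squeezed between $gW$ and $P_p$ --- a standard refinement that leaves your case analysis intact.
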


Clearly, a general period has rational rank $0$. Periods of rational rank $2$ are said to have \emph{maximal Picard rank}.

\begin{theorem} \label{theorem main}
Let $X$ be a primitive symplectic variety. Suppose that every primitive symplectic variety which is a locally trivial deformation of\, $X$ satisfies the rational SYZ conjecture. Then the following hold:  
\begin{enumerate}
    \item\label{tmain-1} If\, $b_2(X) \geq 5$, then $X$ is non-hyperbolic.
    \item\label{tmain-2} If\, $b_2(X) \geq 5+\rrk(X)$, then $d_X \equiv 0$.
\end{enumerate}
\end{theorem}

The proof of this theorem will occupy the rest of the section. The main geometric ingredient for the proof of Theorem~\ref{theorem main} is the following result.

\begin{theorem} \label{theorem lagrangian fibration}
Let $X$ be a projective primitive symplectic variety with $b_2(X)\geq 5$, and let $L\in \Movbar(X)$ be a non-trivial line bundle on $X$ with $q_X(L)=0$. If\, $X$ satisfies the rational SYZ conjecture, then $d_{Z}\equiv 0$ for every compact variety $Z$ birational to $X$.
\end{theorem}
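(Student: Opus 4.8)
The goal is to show $d_Z\equiv 0$ for every compact $Z$ birational to $X$, under the hypothesis that $X$ is projective primitive symplectic, $b_2(X)\geq 5$, $L$ is movable with $q_X(L)=0$, and the rational SYZ conjecture holds. By the rational SYZ conjecture, $L$ induces a rational Lagrangian fibration $f:X\ratl B$. The strategy, following the outline in the introduction and illustrated by Example~\ref{example elliptic k3}, is to produce a birational model on which the fibration becomes chain connected by its fibers, deduce vanishing of the Kobayashi pseudometric there using that the fibers (being abelian-variety--fibered Lagrangians) have vanishing Kobayashi metric, and finally transport this back to an arbitrary birational model $Z$.

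First I would invoke Lemma~\ref{lemma lmp} to replace $X$ by a primitive symplectic variety $X'$ with $\Q$-factorial terminal singularities carrying a genuine holomorphic Lagrangian fibration $f':X'\to B$, with $L$ becoming the pullback of an ample bundle on $B$; by Theorem~\ref{theorem matsushita} the general fiber is an abelian variety of dimension $n$, so its Kobayashi pseudometric vanishes. The key dichotomy, as flagged in the outline, is whether $f'$ admits a \emph{second}, distinct (rational) Lagrangian fibration or whether $X'$ has nontrivial divisorial contractions. In the first case one gets two transversal fibrations and vanishing is immediate from the triangle inequality for $d$ (since every point lies on fibers of both, and the fibers cover $X'$ and pairwise meet after chaining). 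In the second case the plan is to perform a birational contraction $\pi:X'\ratl \bar X$ that forces the images of the fibers of $f'$ to all meet, so that $\bar X$ is chain connected by the (images of the) Lagrangian fibers. This is exactly the mechanism of Example~\ref{example elliptic k3}: contracting a section makes all fibers pass through one point.

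To make ``chain connected by fibers'' precise and usable I would apply Theorem~\ref{theorem almost holomorphic matsushita}: once the fibration ceases to be almost holomorphic on the contracted model $\bar X$, the family of fibers is no longer separated, so the $\gothF$-equivalence relation (Definition~\ref{definition f-connected}) collapses to all of $\bar X$, i.e.\ $\bar X$ is chain connected by an irreducible covering family of cycles each of which has vanishing Kobayashi pseudometric. Chaining distance-decreasing maps from the fibers then gives $d_{\bar X}\equiv 0$. Finally, to pass from $\bar X$ (and from $X'$) to an arbitrary compact $Z$ birational to $X$, I would use the birational-invariance machinery: a dominant meromorphic map from a variety with vanishing pseudometric forces the target to have vanishing pseudometric (Lemma~\ref{lemma properties hyperbolicity}(3)), applied to a common resolution of the birational correspondence between $\bar X$ and $Z$.

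\textbf{The main obstacle.} The delicate point, as the authors themselves signal (``the largest part of the article''), is establishing the dichotomy and, in the divisorial-contraction case, ensuring that contracting to $\bar X$ genuinely destroys almost holomorphicity of the fibration rather than merely changing the model. Example~\ref{example contraction} warns that vanishing of the Kobayashi pseudometric is \emph{not} a birational invariant in general, so the transport step must be handled with care: one cannot simply pull back along a resolution (that can \emph{increase} the pseudometric), and must instead always push forward along dominant maps out of a model already known to satisfy $d\equiv 0$, invoking Lemma~\ref{lemma properties hyperbolicity}(3). Concretely, the hard analytic input is controlling how the Lagrangian fibers interact after the divisorial contraction so that Campana's theorem (Theorem~\ref{theorem campana}) and its Lagrangian refinement (Theorem~\ref{theorem glr}, Theorem~\ref{theorem almost holomorphic matsushita}) can be brought to bear to conclude chain connectedness.
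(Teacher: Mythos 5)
Your skeleton matches the paper's strategy (rational SYZ fibration, Campana's theorem plus Theorem~\ref{theorem almost holomorphic matsushita} to turn failure of almost holomorphy into chain connectedness by fibers, contraction as in Example~\ref{example elliptic k3}), but two essential mechanisms are missing, and you flag both as ``the main obstacle'' without resolving them. First, the dichotomy and its inductive engine. The paper does not posit the dichotomy abstractly: since $b_2(X)\geq 5$, a rational plane through $[L]$ meeting the interior of the positive cone produces a second, non-proportional isotropic class $[L']\in\Pic(X)$. If $L'$ is movable one gets the second fibration; if not, \cite[Proposition~5.8]{LMP22a} produces a prime exceptional divisor $E$ with $q_X(E,L)\geq 0$, contracted after flips by Druel's argument. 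You also miss an entire sub-case: when $q_X(E,L)>0$ the ruling curves of $E$ are \emph{horizontal}, and one simply adds them to the family of fibers and applies Campana --- no contraction is needed. Only when $q_X(E,L)=0$ is $E$ vertical, and then one contracts and \emph{inducts on the Picard rank} of a $\Q$-factorial model. Crucially, the induction terminates: at $\rho=2$ the pushed-forward bundle $\bar L$ becomes ample on $\bar X$, which is incompatible with the transported fibration being almost holomorphic (an almost holomorphic Lagrangian fibration forces $q$-square zero on the inducing class). This is precisely the argument that ``contracting genuinely destroys almost holomorphicity,'' which your proposal names as the hard point but leaves open.

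Second, your transport step is circular. You correctly note that vanishing of $d$ is not a birational invariant (Example~\ref{example contraction}) and that Lemma~\ref{lemma properties hyperbolicity}(3) only pushes vanishing \emph{forward} along dominant maps; but then you propose to apply it ``to a common resolution of the birational correspondence between $\bar X$ and $Z$.'' A common resolution $W$ \emph{dominates} $\bar X$, so $d_{\bar X}\equiv 0$ gives no information about $d_W$, and without $d_W\equiv 0$ you cannot conclude anything about $Z$ --- indeed on this reading you cannot even recover $d_X\equiv 0$, since the maps in your construction go from $X$ toward $\bar X$. The paper closes this gap with a geometric lifting argument: each irreducible component of the chain-connecting family of cycles on the final model $X''$ covers a locus of codimension at most one (fibers of fibrations, or rulings of uniruled divisors), so the family lifts via strict transform to a resolution of indeterminacy $Y$ of $X\ratl X''$; adding the exceptional rational curves of $Y\to X''$, whose fibers are rationally chain connected by \cite[Corollary~1.5]{HM07}, makes $Y$ itself chain connected by cycles with vanishing pseudometric, whence $d_Y\equiv 0$ directly on the resolved model. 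Since $Y$ may be chosen to dominate any given compact $Z$ birational to $X$, Lemma~\ref{lemma properties hyperbolicity}(3) then yields $d_Z\equiv 0$. Without this lifting step your argument does not reach the stated conclusion. (A lesser looseness: in the two-fibration case, chain connectedness is not ``immediate from the triangle inequality''; a fiber of $f$ need not dominate the base of $f'$, and the paper instead applies Campana to the union family $\gothF\cup\gothF'$ and uses Theorem~\ref{theorem almost holomorphic matsushita} to force the base to be a point.)
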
 
\begin{proof}
We denote by $f\colon X\ratl B$ the rational Lagrangian fibration induced by $L$. The class $[L]\in \Pic(X)$ is isotropic, so taking a rational plane containing it and passing through the interior of the positive cone, one finds a non-proportional isotropic class $[L']\in \Pic(X)$ whose sign we choose in such a way that $[L']$ lies on the boundary of the positive cone. Note that $X$ has Picard rank at least $2$. 

We make a case distinction, depending on whether $L'\in \Movbar(X)$. If this is the case, $L'$ gives rise to a rational Lagrangian fibration \mbox{$f'\colon X \ratl B'$}. As the classes of $L$ and~$L'$ are not proportional, the maps $f$ and~$f'$ are distinct rational maps. Hence, $X$ is covered by families $\gothF, \gothF'$ of cycles such that the generic member of either family is birational to an abelian variety by Theorem~\ref{theorem matsushita} and Lemma~\ref{lemma lmp}. Moreover, these families are distinct. So if we consider the family $\gothF \cup \gothF'$ of analytic cycles, we obtain an almost holomorphic map $g\colon X \ratl S$ by Campana's Theorem~\ref{theorem campana}. Clearly, the fibers of $g$ have dimension strictly greater than $\dim X/2$. Hence, by Theorem~\ref{theorem almost holomorphic matsushita}, the base $S$ must be a point. In particular, $X$ is chain connected by cycles in $\gothF \cup \gothF'$. Since $d_X\vert_F\equiv 0$
for every $F \in \gothF \cup \gothF'$, the claim follows. 

It remains to treat the case where $L'\notin \Movbar(X)$. We will argue by induction on the Picard rank of~$X$. For the inductive argument, we need $X$ to be $\Q$-factorial, which we may assume by replacing it by a $\Q$-factorialization; see \cite[Corollary~~1.4.3]{BCHM10}. This may increase the Picard number once, but during the inductive process, we will always remain $\Q$-factorial. 

First of all, let us observe that in case $f$ is not almost holomorphic, a similar argument as above shows that $d_X\equiv 0$. Indeed, Campana's theorem applied to the family~$\gothF$ of analytic cycles given by the fibers of $f$ will result in an almost holomorphic map with fiber dimension strictly greater than $\dim X/2$. Thus, from now on, whenever a Lagrangian fibration appears, we may assume it to be almost holomorphic. 

As $L'\notin\Movbar(X)$, by \cite[Proposition~5.8]{LMP22a}, there exists a prime exceptional divisor $E$ on $X$ and $q_X(E,L)\geq~0$ as $L\in \Movbar(X)$.  By the argument of \cite[Th\'eor\`eme~3.3]{Dru11}, the divisor $E$ can be contracted after a sequence of flips. In particular, there exist a birational map $\phi\colon X \ratl X'$ and a contraction $\pi\colon X' \to \bar X$ of~$E$ such that $\phi$ is an isomorphism in codimension~$1$ and $\pi$ is an isomorphism outside $E$. In particular, the Picard rank of $X'$ coincides with that of $X$. We conclude that $E$ is uniruled. Both $X'$ and $\bar X$ are $\Q$-factorial primitive symplectic varieties. 

Let us first assume that $q_X(E,L)>0$. From \cite[Remark~3.11]{LMP22a}, we infer that~$L$ has positive degree on the general ruling curve $R$ of $E$, so $R$ cannot be contracted by~$f$. As above, we apply Campana's theorem to the family of fibers of $f$ together with the ruling curves of $E$. 

We are left with the case $q_X(E,L)=0$. Then $E$ is vertical with respect to the almost holomorphic Lagrangian fibration $f$. We replace $X$ by the target of the contraction $\pi\colon X \to \bar X$ of $E$ and $L$ by the $\Q$-line bundle $\bar L:=\pi_*\phi_* L$. If $\bar f:= \pi \circ \phi \circ f$ is almost holomorphic, then $\bar L$ still has BBF square zero (it cannot be positive as it intersects a general curve in the fibers of $\bar f$ trivially). Now, the Picard rank of $\bar X$ is strictly smaller than that of $X$, and we proceed inductively. Note that if $\rk\Pic(X)=2$, the $\Q$-line bundle $\bar L$ becomes ample on $\bar X$, so the fibration $\bar f\colon \bar X \ratl B$ cannot be almost holomorphic. 

We will explain next how to deduce $d_X \equiv 0$. Note that for singular varieties, the vanishing of the Kobayashi pseudometric is not a birational invariant in general. The inductive argument above produces a diagram
\[
\xymatrix{
& Y \ar[dl]_{\pi}\ar[dr]^{\pi'}&\\
X\ar@{-->}[rr]^\psi&&X''\rlap{,}\\
}
\]
where $X''$ is primitive symplectic, $\psi$ is birational, $Y$ is a resolution of indeterminacy of~$\psi$, and $X''$ is chain connected by a family $\gothG$ of analytic cycles satisfying the following two properties. For every $G\in \gothG$, we have $d_{X''}\vert_G \equiv 0$, and for every irreducible component $\gothG_0 \subset \gothG$, the locus covered by cycles in $\gothG_0$ has codimension at most~$1$ (as they come either  from fibers of rational fibrations or from rulings of uniruled divisors). Hence, the family $\gothG$ lifts to $Y$. There we add the $\pi'$-exceptional rational curves and obtain a family of cycles for which $Y$ is chain connected. Note that fibers of $\pi'$ are rationally chain connected by \cite[Corollary~1.5]{HM07}. We deduce $d_Y \equiv 0$ and thus also $d_X\equiv 0$. As we could have replaced $Y$ by a further blowup, the claim about the vanishing of the Kobayashi metric on a birational model of $X$ follows.
\end{proof}

Now that we have established the vanishing of the Kobayashi pseudometric for primitive symplectic varieties admitting Lagrangian fibrations, we use an ergodicity argument to transport this property to all varieties in the same component of the moduli space. For this, we need the following preliminary consideration. 

Let $X$ be a primitive symplectic variety, and let $\mu$ be a marking on $X$. Associated to the pair $(X,\mu)$, there is the \emph{monodromy group} $\Mon \subset \O(\Lambda)$. It is defined to be the image under the morphism $\O(H^2(X,\Z))\to \O(\Lambda)$ induced by the marking $\mu$ of the group of automorphisms of $H^2(X,\Z)$ that arise by parallel transport in locally trivial families. This group only depends on the connected component of the marked moduli space containing $(X,\mu)$; we refer to \cite[Section~8]{BL22} for more details.

Let $X'$ be a primitive symplectic variety which is equivalent to $X$ by locally trivial deformations. We endow $X'$ with a marking $\mu'$ that is obtained from the one of $X$ by parallel transport. Let us denote by $p:=\mu(H^{2,0}(X))$, $p':=\mu'(H^{2,0}(X')) \in\Omega_\Lambda$ the periods of $X, X'$ thus obtained. Here, $\Lambda$ is a lattice isometric to $(H^2(X,\Z),q_X)$. 

\begin{definition}\label{definition monodromy orbit closure}
We say that $X$ \emph{is in the $\Mon$-orbit closure of} $X'$ if $p\in \ol{\Mon\cdot p'}$. 
\end{definition}

Note that this definition does not depend on the choice of $\mu$ as long as $\mu'$ is chosen as explained above. The following is the analog of \cite[Theorem~2.1]{KLV14} in the smooth case. The idea of the proof is essentially the same, but for convenience, we spell out the details.

\begin{proposition} \label{proposition usc} 
Let $X$ be a projective primitive symplectic variety with a rational Lagrangian fibration induced by a line bundle. Assume that $b_2(X) \geq 5$ and that the rational SYZ conjecture holds. Then every primitive symplectic variety $X'$ locally trivially deformation equivalent to $X$ such that $X$ is in the $\Mon$-orbit closure of\, $X'$  satisfies $d_{X'}\equiv 0$.
\end{proposition}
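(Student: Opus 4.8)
The plan is to deduce the statement from Theorem~\ref{theorem lagrangian fibration} by an ergodicity argument in the spirit of \cite[Theorem~2.1]{KLV14}, the only genuinely new point being the bookkeeping forced by the failure of birational invariance of $d$ in the singular setting. First I would record the input vanishing: the line bundle $L$ inducing the rational Lagrangian fibration on $X$ is movable with $q_X(L)=0$, so Theorem~\ref{theorem lagrangian fibration} applies and yields the strong conclusion $d_Z\equiv 0$ for \emph{every} compact variety $Z$ bimeromorphic to $X$, and not merely $d_X\equiv 0$. This strengthening is exactly what makes the argument robust against the birational models that appear below. Next I would fix the combinatorial frame: a marking $\mu$ of $X$, the connected component $\gothM_\Lambda$ of the marked moduli space containing $(X,\mu)$, the period map $\scrP\colon \gothM_\Lambda\to\Omega_\Lambda$ (a local isomorphism by local Torelli, cf.\ Theorem~\ref{corollary symplectic locally trivial}), and the monodromy group $\Mon\subset\O(\Lambda)$. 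Writing $p=\scrP(X,\mu)$ and $p'=\scrP(X',\mu')$ for the marking $\mu'$ obtained by parallel transport, the hypothesis that $X$ lies in the $\Mon$-orbit closure of $X'$ is precisely $p\in\overline{\Mon\cdot p'}$, so there are $\gamma_k\in\Mon$ with $\gamma_k p'\to p$.

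The heart of the argument is to exhibit $X'$ as a general fibre of a locally trivial analytic family degenerating to a bimeromorphic model of $X$. For each $k$ the re-marked pair $(X',\gamma_k\mu')$ is a point of $\gothM_\Lambda$ whose underlying variety is literally $X'$ and whose period is $\gamma_k p'$. Since $\scrP$ is a local isomorphism and the periods converge to $p$, I would extract a subsequence along which these points converge in $\gothM_\Lambda$ to a point $x_0$ lying over $p$; pulling back the universal family near $x_0$ then produces a locally trivial family $\scrZ\to\Delta$ over a disc with special fibre $Z_0=:X_0$ of period $p$ and with fibres $Z_{t_k}\cong X'$ for a sequence $t_k\to 0$. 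By the Torelli theorem for primitive symplectic varieties \cite{BL22}, $X_0$ is bimeromorphic to $X$.

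With such a family in hand I would pass to a simultaneous resolution $\tilde\scrZ\to\scrZ$, which exists in locally trivial families by \cite[Corollary~2.27]{BGL22} and yields a smooth family $\tilde\scrZ\to\Delta$ whose fibre $\tilde Z_t$ is a resolution of $Z_t$. The special fibre $\tilde Z_0$ is a resolution of $X_0$, hence bimeromorphic to $X$, so $d_{\tilde Z_0}\equiv 0$ by the strong form of Theorem~\ref{theorem lagrangian fibration}. The upper semi-continuity of the Kobayashi diameter for smooth families \cite[Corollary~1.23]{KLV14}, applied at $0\in\Delta$, then gives $0=\operatorname{diam}(\tilde Z_0)\ge\limsup_k\operatorname{diam}(\tilde Z_{t_k})$, so $\operatorname{diam}(\tilde Z_{t_k})\to 0$. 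Since the resolution morphism $\tilde Z_{t_k}\to Z_{t_k}\cong X'$ is surjective and holomorphic, Lemma~\ref{lemma properties hyperbolicity} gives $\operatorname{diam}(X')\le\operatorname{diam}(\tilde Z_{t_k})\to 0$, whence $d_{X'}\equiv 0$. Note the two independent uses of the strong birational statement and of the resolution: the former forces $d\equiv 0$ on the special fibre's resolution, the latter transports vanishing \emph{down} to the possibly singular $X'$ via the distance-decreasing property.

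The main obstacle is the convergence step producing $x_0$: the period map is only a local isomorphism and is badly non-Hausdorff, so a priori the points $(X',\gamma_k\mu')$ need not converge even though their periods do, distinct sheets over nearby periods corresponding to distinct bimeromorphic models. The key observation is that the fibre of $\scrP$ over a point is the (marked) set of bimeromorphic models, which is finite by the cone-conjecture type finiteness underlying \cite[Theorem~1.2]{LMP22a}; hence over a small neighbourhood of $p$ there are only finitely many sheets, and a pigeonhole argument forces a subsequence of $(X',\gamma_k\mu')$ into a single sheet, where it genuinely converges to a point $x_0$ over $p$. The limit $X_0$ is then some bimeromorphic model of $X$ that we do not control precisely, which is exactly why the birationally invariant vanishing of Theorem~\ref{theorem lagrangian fibration} is indispensable: it guarantees $d\equiv 0$ on (a resolution of) $X_0$ whatever model is reached, while the general fibres of the family remain honestly isomorphic to $X'$.
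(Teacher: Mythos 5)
Your argument is correct up to, and downstream of, the convergence step, but the convergence step itself has a genuine gap, and it is exactly the step your whole construction rests on. You claim that the fibre of the period map over a point of $\Omega_\Lambda$ is finite, citing the finiteness ``underlying'' \cite[Theorem~1.2]{LMP22a}, and you use this to pigeonhole the re-marked points $(X',\gamma_k\mu')$ into a single sheet converging to a point $x_0$ over $p$. This finiteness is false in general. The fibre of the period map on the marked moduli space consists of \emph{marked} bimeromorphic models, and re-marking by monodromy operators fixing the period (e.g.\ reflections in prime exceptional or wall divisors, which act on the chamber decomposition of the positive cone) typically produces infinitely many pairwise distinct points over the same period --- already for K3 surfaces the fibre is in bijection with the chambers of an infinite hyperbolic reflection group whenever the Weyl group is infinite. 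What \cite{LMP22a} gives is a rational polyhedral fundamental domain for the action of $\operatorname{Bir}(X)$ on $\Mov^+(X)$, hence finiteness of models \emph{up to isomorphism}; it does not bound the marked fibre, which is what your sheet-counting needs. Without it, the sequence $(X',\gamma_k\mu')$ can spread over infinitely many ``sheets'' and converge to no point of the (non-Hausdorff) moduli space, so the locally trivial family $\scrZ\to\Delta$ with $Z_{t_k}\cong X'$ and special fibre bimeromorphic to $X$ is never constructed. (In the smooth case the analogous family exists by Huybrechts' inseparability theorem for birational hyperk\"ahler manifolds, which is presumably what you had in the back of your mind, but you cannot get it from finiteness of models, and in the singular locally trivial setting one should not take it for granted.)

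The paper's proof avoids needing the nearby fibres to be isomorphic to $X'$ altogether, and this is the essential difference. It works with the fixed universal family $\scrX\to S=\Def^\lt(X)$ and a simultaneous resolution $\scrY\to\scrX$ from \cite[Corollary~2.27]{BGL22}, considers the sub-level sets $U_\veps=\{s\mid \diam(\scrY_s)<\veps\}$ of the Kobayashi diameter (open by \cite[Theorem~2.1]{KLV14}, nonempty at $0$ by the strong form of Theorem~\ref{theorem lagrangian fibration}, as in your first step), and observes that these sets are saturated under the $\Mon$-action on $S\subset\Omega_\Lambda$; since $\Mon.p'$ accumulates at $p$, \emph{every} point of $\Mon.p'\cap S$ lies in every $U_\veps$, so $\diam(\scrY_s)=0$ there. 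The fibre $\scrX_s$ is then only \emph{bimeromorphic} to $X'$ (by global Torelli \cite{BL22}), and the transport to $X'$ is done by resolving the indeterminacy of $\scrY_s\ratl X'$ and applying Lemma~\ref{lemma properties hyperbolicity}\eqref{item bimeromorphic to smooth} --- which crucially requires the \emph{smooth} model $\scrY_s$ with vanishing pseudometric, precisely to sidestep the failure of birational invariance of $d$ for singular varieties (Example~\ref{example contraction}). Your final transport step (resolution morphism onto $X'$ plus semicontinuity) is sound, so the repair of your proof amounts to replacing the convergence-in-moduli step by this saturation-plus-bimeromorphic-transport argument, i.e.\ by the paper's proof.
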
 
\begin{proof}
Let $\scrX \to \Def^\lt(X)=:S$ be the universal deformation of $X$, and let $\pi\colon  \scrY \to \scrX$ be a simultaneous resolution of singularities, which exists by \cite[Corollary~2.27]{BGL22}. Let us denote by $\pi_0\colon Y:=\scrY_0\to X$ the central fiber and consider the diameter function 
$$\diam\colon  S \longrightarrow {\mathbb R}_{\ge 0}, \quad s\longmapsto \diam(\scrY_s)$$
for the Kobayashi pseudometric. It was shown in \cite[Theorem~2.1]{KLV14} that $\diam$ is upper semi-continuous for families of smooth varieties. Hence, for all $\eps >0$, the sets 
$$U_\veps:=\{s\in S \mid \diam(\scrY_s) < \veps\}$$
are open (and non-empty, as $0\in U_\veps$ by Theorem~\ref{theorem lagrangian fibration}). By the local Torelli theorem, see \cite[Proposition~5.5]{BL22}, we can identify $S$ with a small open set in the period domain $\Omega_\Lambda$, where $\Lambda \isom (H^2(X,\Z),q_X)$.  Let us consider the action of the monodromy group $\Mon \subset  \Gamma$  of $X$ on $\Omega_\Lambda$. Let $X'$ be as in the statement of the proposition, and let us adopt the notation of Definition~\ref{definition monodromy orbit closure}. Then the $\Mon$-orbit of the period $p'\in \Omega_\Lambda$ of $X'$ has the period $p\in \Omega_\Lambda\cap S$ of $X$ in its closure. 

The sets $U_\veps$ are saturated for the $\Mon$-action in the sense that $(\Mon.U_\veps) \cap S = U_\veps$. In particular, the set $\Mon\cdot p' \cap S$ is contained in $U_\veps$ for every $\veps >0$. It follows that $\diam(\scrY_s) =0$ for all $s\in \Mon\cdot p' \cap S$. For each such $s$,  the global Torelli theorem, see \cite[Theorem~1.1]{BL22}, implies that $X'$ and $\scrY_s$ are bimeromorphic. Let us choose a bimeromorphism $\scrY_s \ratl X'$ and a resolution of indeterminacies, \textit{i.e.}, a diagram 
\[
\xymatrix{
&W\ar@/^/[dr]^p\ar@/_/[dl]_q&\\
 \scrY_s \ar@{-->}[rr] &  & X\rlap{,}\\
}
\]
where $p,q$ are bimeromorphic morphisms and $W$ is a smooth and compact variety. By item \eqref{item bimeromorphic to smooth} of Lemma~\ref{lemma properties hyperbolicity}, the variety $W$ also has Kobayashi diameter $0$. But $p$ is distance decreasing and surjective, so the same holds for $X$ and the claim follows.
\end{proof}

\begin{proof}[Proof of Theorem~\ref{theorem main}]
We start by proving~\eqref{tm-2}. In view of Proposition~\ref{proposition usc}, we need to find a small locally trivial deformation $Y$ of $X$ which is projective, admits a rational Lagrangian fibration, and is contained in the $\Mon$-orbit closure of $X$. Note that the subgroup $\Mon\subset \Gamma$ has finite index by \cite[Theorem~1.1]{BL22}, so the analogous trichotomy to Theorem~\ref{theorem verbitsky} holds for orbit closures of $\Mon$. 

Let us choose a marking $\mu$ on $X$, and let us fix a lattice $\Lambda$ that is isometric to $(H^2(X,\Z),q_X)$. The assumption $b_2(X)\geq 5+\rrk(X)$, together with Meyer's theorem, shows that there is an isotropic class 
\[
\alpha\in \Lambda \cap \mu\left(H^2(X,\Q) \cap \left(H^{2,0}(X) \oplus H^{0,2}(X)\right)\right)^\perp.
\]
If $\rrk(X)=2$, the class $\alpha$ is of type $(1,1)$ on $X$ itself, and the rational SYZ conjecture allows us to conclude. If $\rrk(X)=0$, we first choose a period in $\alpha^\perp$ of rational rank at most $1$ and hence find a primitive symplectic variety $Y$ in the same component of the marked moduli space realizing that period, by~\cite[Theorem~1.1]{BL22}. We may assume $Y$ to be projective by \cite[Corollary~6.10]{BL22} and the assumption on $b_2(X)$. It remains to treat the case where $\rrk(X)=1$. Let $\lambda$ be a generator of  $H^2(X,\Q) \cap (H^{2,0}(X) \oplus H^{0,2}(X))$. As $\lambda \in \alpha^\perp$, we just need to choose any other $\mu \in \Lambda_\R \cap \alpha^\perp$ for which $\left\langle\lambda, \mu\right\rangle$ is a positive $2$-space and not rational. This $2$-space defines a period of rational rank $1$ that is in the orbit closure of $X$. The sought-for variety $Y$ is again obtained from \cite[Theorem~1.1]{BL22}, and hence~\eqref{tm-2} follows.

Finally,~\eqref{tmain-1} follows from~\eqref{tmain-2}, as the property of being non-hyperbolic is closed in families; see Theorem~\ref{Brody theorem}.
\end{proof}

While primitive symplectic varieties form a large class of singular holomorphic symplectic varieties, one may wonder whether assuming primitivity is really necessary. The following observation shows that it is indeed superfluous. By a \emph{holomorphic symplectic variety}, we mean a variety with rational singularities having a holomorphic symplectic form on the regular part.

\begin{proposition}\label{proposition decomposition theorem}
If the Kobayashi pseudometric vanishes for every irreducible symplectic variety, then the same holds true for any compact K\"ahler holomorphic symplectic variety.
\end{proposition}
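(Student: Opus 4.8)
The plan is to reduce the general statement to the irreducible symplectic case by invoking the decomposition theorem, which is precisely the result cited in the introduction as \cite[Theorem~A]{BGL22} (see also \cite[Theorem~1.5]{HP19}). Let $X$ be a compact K\"ahler holomorphic symplectic variety in the sense just defined, i.e.\ a variety with rational singularities carrying a symplectic form on its regular part. After passing to a suitable finite quasi-\'etale cover, the decomposition theorem asserts that $X$ admits a finite quasi-\'etale cover $\wt X$ that splits as a product
\[
\wt X \isom T \times \prod_i Y_i \times \prod_j Z_j,
\]
where $T$ is a complex torus (the Albanese/abelian factor), the $Y_i$ are irreducible symplectic varieties, and the $Z_j$ are irreducible Calabi--Yau type varieties. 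Since $\wt X$ is holomorphic symplectic, the Calabi--Yau factors $Z_j$ do not actually occur (a symplectic form on the product forces each factor to be symplectic, and irreducible Calabi--Yau varieties of dimension $>0$ carry no symplectic form), so that in fact $\wt X \isom T \times \prod_i Y_i$ with each factor holomorphic symplectic.

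The next step is to run this through the formal properties of the Kobayashi pseudometric collected in Lemma~\ref{lemma properties hyperbolicity}. By hypothesis $d_{Y_i}\equiv 0$ for every irreducible symplectic factor, and $d_T\equiv 0$ since the torus $T$ is a quotient of $\C^{\dim T}$ and thus carries entire curves through every point (more formally, any translate of a one-parameter subgroup witnesses vanishing, so $d_T = 0$). Applying the product property Lemma~\ref{lemma properties hyperbolicity}(4) to the factorization $\wt X \isom T \times \prod_i Y_i$, we conclude $d_{\wt X}\equiv 0$. Finally, the quasi-\'etale cover $\wt X \to X$ is finite; hence by Lemma~\ref{lemma properties hyperbolicity}(2), the property ``$d_X = 0$'' descends from $\wt X$ to $X$, giving $d_X\equiv 0$.

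The one point requiring slight care is that the decomposition theorem produces a \emph{quasi-\'etale} cover rather than a genuinely \'etale one: it is \'etale only in codimension one. However, Lemma~\ref{lemma properties hyperbolicity}(2) is stated for finite morphisms (for the implication ``$Y$ has $P$ if $X$ does'') and for finite \'etale morphisms (for the converse). Since we are transporting the vanishing from the cover $\wt X$ \emph{down} to the quotient $X$, we only need the direction valid for arbitrary finite morphisms: if $d_{\wt X}\equiv 0$ and $\wt X \to X$ is finite surjective, then $d_X\equiv 0$. This is immediate because the finite covering map is distance decreasing and surjective (Lemma~\ref{lemma properties hyperbolicity}(1)), so a vanishing pseudometric upstairs forces a vanishing pseudometric downstairs.

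I expect the main (and only real) obstacle to be bookkeeping rather than mathematics: one must ensure that the version of the decomposition theorem being invoked applies in the compact K\"ahler (not merely projective) setting and yields a product decomposition after a \emph{finite} cover, and one must confirm that the symplectic hypothesis rules out the Calabi--Yau factors. Both of these are addressed by the cited references \cite{BGL22, HP19}, so no new input is needed beyond carefully quoting them. Everything else is a direct application of the standard functoriality in Lemma~\ref{lemma properties hyperbolicity}.
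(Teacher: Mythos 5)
Your proof is correct and takes essentially the same route as the paper: both reduce via the decomposition theorem of \cite[Theorem~A]{BGL22} to a finite quasi-étale cover splitting as a product of a complex torus and irreducible symplectic factors, then combine the product property and descent along the finite, distance-decreasing cover from Lemma~\ref{lemma properties hyperbolicity}. Your extra remark that irreducible Calabi--Yau factors cannot occur in the splitting of a holomorphic symplectic variety is a detail the paper leaves implicit, but it is correct and does not change the argument.
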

\begin{proof}
Let $X$ be a compact K\"ahler holomorphic symplectic variety. By the decomposition theorem \cite[Theorem~A]{BGL22}, we know that, up to a finite quasi-\'etale cover, $X$ is a product of irreducible symplectic varieties and complex tori of even dimension. The finite cover is distance decreasing, so we are reduced to showing the claim separately for tori and irreducible symplectic varieties; see Lemma~\ref{lemma properties hyperbolicity}. For tori, the claim is obvious, and for irreducible symplectic factors, the claim holds by assumption.
\end{proof}

\section{Applications and examples}\label{section examples}

Here, we discuss some examples of (orbifold) primitive symplectic varieties with small second Betti numbers $b_2$. Still, it is possible to show the vanishing of their Kobayashi pseudometrics as they are quotients of primitive symplectic varieties with $b_2 \geq 7$ so that our result applies to the covering variety (if the covering variety has $b_2\geq 13$, one can of course also use \cite{KLV14}). We also discuss some crepant partial resolutions of these quotients.

\begin{example}
Fu and Menet \cite[Example 5.2]{FM21} construct the following quotients based on Mongardi's Ph.D.\ thesis work \cite[Section 4.5]{Mon13}: $M_{11}^i = X_i/\sigma_i$, where $X_i$ is a $K3^{[2]}$-type manifold endowed with a special symplectic automorphism $\sigma_i$ of order $11$, for $i =1,2$. Both primitive symplectic orbifolds $M_{11}^i$ have second Betti number $b_2(M_{11}^i)=3$. Since the Kobayashi pseudometric of $X_i$ vanishes by \cite[Remark~1.2, Theorem~1.3]{KLV14}, the Kobayashi pseudometric of the quotients $M_{11}^i$ also vanishes by Lemma~\ref{lemma properties hyperbolicity}.  

Similarly, based on Mongardi's Ph.D.\ thesis work \cite[Section 4.4]{Mon13}, Fu and Menet \cite[Example~5.3]{FM21} construct a quotient $M_7 = X / \sigma$, where $X$ is a  $K3^{[2]}$-type manifold endowed with a symplectic automorphism~$\sigma$ of order $7$. In this case, $b_2(M_{7})=5$. As above, one concludes that the Kobayashi pseudometric of~$M_7$ vanishes.
\end{example}

\begin{example} \label{Giovanni example}
We learned the following example from Giovanni Mongardi. Let $S$ be Fermat's quartic $K3$ surface, and let us consider the symmetries coming from the symmetries of its defining equation. The automorphism group of $S$ as a projective variety in $\mathbb P^3$ can be computed as $\Aut (S) = (\mathbb Z / 4 \mathbb Z)^3 \rtimes S_4$, where $S_4$ is the symmetric group. Not all of these automorphisms preserve the symplectic form of $S$. The group of symplectic automorphisms of $S$ is the kernel of the natural homomorphism $\Aut (S) = (\mathbb Z / 4 \mathbb Z)^3 \rtimes S_4 \rightarrow \mathbb C^*$, which is denoted by $F_{384}$, and it is a subgroup of order $384$ of the Mathieu group $M_{24}$; see \cite{Muk88}. 
Let $n \geq 2$, let $G$ be the induced group of symplectic automorphisms of $S^{[n]}$ preserving the degree~$4$ polarization, and let $X := S^{[n]}/G$. Then, $b_2(X)=4$ by \cite{Has12}; see in particular Section~10.3 there ($F_{384}$ is the group number $80$ in the list). Since the Kobayashi pseudometric of $S^{[n]}$ vanishes by \cite[Remark~1.2, Theorem~1.3]{KLV14}, the Kobayashi pseudometric of the quotient $X$ also vanishes by Lemma~\ref{lemma properties hyperbolicity}. 
\end{example}

\begin{example}
Let $T$ be a complex 2-torus equipped with a symplectic automorphism~$\sigma_4$ of order $4$ as constructed by Fu and Menet \cite[Example 5.4]{FM21}. Let $K_2(T)$ be the generalized Kummer variety associated to $T$, and let $\sigma_4^{[2]}$ be the automorphism extending $\sigma_4$ on $K_2(T)$. Fu and Menet construct a proper birational map  \mbox{$K_4'\to K_2(T) / \sigma_4^{[2]}$}, where $K_4'$ is a crepant resolution in codimension $2$. The primitive symplectic orbifold~$K_4'$ has second Betti number $b_2(K_4')=6$ and by construction is dominated by a blowup of $K_2(T)$ so that its Kobayashi pseudometric vanishes by Lemma~\ref{lemma properties hyperbolicity}. We use that the Kobayashi pseudometric of $K_2(T)$ vanishes by Theorem~\ref{theorem main}. 

The question remains whether the Kobayashi pseudometric also vanishes on all locally trivial deformations of $K_4'$ as the generic such deformation will no longer be birational to a quotient of a generalized Kummer variety. We do not know whether the SYZ conjecture holds for deformations of $K_4'$. However, instead of Lagrangian fibered varieties, we can use quotients as an input and then argue as in Proposition~\ref{proposition usc}. With this modification, the argument of our main result Theorem~\ref{theorem main} implies that all deformations of $K_4'$ are non-hyperbolic and that all of them except for maybe those with maximal Picard rank have vanishing Kobayashi pseudometric.
\end{example}

\begin{example}
Let $X$ be a projective fourfold of $K3^{[2]}$-type admitting a symplectic involution $\iota$. The moduli space of such pairs of objects $(X, \iota)$ is described in \cite[Sections~2 and 3]{CGKK21}. The fixed loci of symplectic involutions of $K3^{[2]}$-type manifolds are classified in \cite[Theorem 4.1]{Mongardi_invol}, and, more generally, the fixed loci of symplectic involutions of $K3^{[n]}$-type manifolds are classified in \cite[Theorem 1.1]{KMO22}. The irreducible symplectic orbifolds $Y \rightarrow X/ \iota$, obtained as a partial resolution of $X/ \iota$ for certain fourfolds $X$ of $K3^{[2]}$-type and for certain symplectic involutions $\iota$, are called {\it Nikulin orbifolds}; see \textit{e.g.}~\cite[Definition 3.1]{CGKK21}. Menet \cite[Theorem 2.5]{Men15} has computed their integral second cohomology, and $b_2(Y)=16$. 
Since the Kobayashi pseudometric of $X$ vanishes by \cite[Remark 1.2, Theorem 1.3]{KLV14}, the Kobayashi pseudometric of the quotients $X/ \iota$ and the partial resolutions $Y$ also vanishes by Lemma~\ref{lemma properties hyperbolicity}.  

Note that a general deformation of $Y$ is no longer a partial resolution of the quotient~$X/\iota$, so \cite{KLV14} no longer applies. Our main result Theorem~\ref{theorem main} would guarantee the vanishing of the Kobayashi pseudometric if the rational SYZ conjecture were satisfied. Verifying the (rational) SYZ conjecture for this class of examples seems to be a interesting and valuable task.
\end{example}


\newcommand{\etalchar}[1]{$^{#1}$}
\providecommand{\bysame}{\leavevmode\hbox to3em{\hrulefill}\thinspace}

\end{document}